\newcommand{\bbbn}{\mathbb{N}}
\newcommand{\bbbr}{\mathbb{R}}
\newcommand{\Idx}{\mathcal{I}}
\newcommand{\Jdx}{\mathcal{J}}
\newcommand{\Kdx}{\mathcal{K}}
\newcommand{\ctI}{\mathcal{T}_{\Idx}}
\newcommand{\ctJ}{\mathcal{T}_{\Jdx}}
\newcommand{\ctK}{\mathcal{T}_{\Kdx}}
\newcommand{\ctIJ}{\mathcal{T}_{\Idx\times\Jdx}}
\newcommand{\ctJK}{\mathcal{T}_{\Jdx\times\Kdx}}
\newcommand{\ctIK}{\mathcal{T}_{\Idx\times\Kdx}}
\newcommand{\lfI}{\mathcal{L}_{\Idx}}
\newcommand{\lfIJ}{\mathcal{L}_{\Idx\times\Jdx}}
\newcommand{\lfJK}{\mathcal{L}_{\Jdx\times\Kdx}}
\newcommand{\lfaIJ}{\mathcal{L}^+_{\Idx\times\Jdx}}
\newcommand{\lfaIK}{\mathcal{L}^+_{\Idx\times\Kdx}}
\newcommand{\lfiIJ}{\mathcal{L}^-_{\Idx\times\Jdx}}
\newcommand{\lfaJK}{\mathcal{L}^+_{\Jdx\times\Kdx}}
\newcommand{\ctprod}{\mathcal{T}_{\Idx\times\Kdx}^{XY}}
\newcommand{\lfprod}{\mathcal{L}^{XY}_{\Idx\times\Kdx}}
\newcommand{\lfaprod}{\mathcal{L}^{XY,+}_{\Idx\times\Kdx}}
\newcommand{\lfiprod}{\mathcal{L}^{XY,-}_{\Idx\times\Kdx}}
\newcommand{\treeroot}{\mathop{\operatorname{root}}\nolimits}
\newcommand{\chil}{\mathop{\operatorname{chil}}\nolimits}
\newcommand{\supp}{\mathop{\operatorname{supp}}\nolimits}
\newcommand{\brow}{\mathop{\operatorname{row}}\nolimits}
\newcommand{\adm}{\mathop{\operatorname{adm}}\nolimits}
\newtheorem{theorem}{Theorem}
\newtheorem{definition}[theorem]{Definition}
\newtheorem{lemma}[theorem]{Lemma}
\newtheorem{example}[theorem]{Example}
\newtheorem{remark}[theorem]{Remark}
\newenvironment{proof}{\emph{Proof.}}{\hfill $\Box$}
\begin{document}
\title{Adaptive fast multiplication of $\mathcal{H}^2$-matrices}
\author{Steffen B\"orm\thanks{Mathematisches Seminar, Universit\"at Kiel,
    Heinrich-Hecht-Platz 6, 24118 Kiel, Germany\hfill\strut\linebreak boerm@math.uni-kiel.de}}

\maketitle

\begin{abstract}
Hierarchical matrices approximate a given matrix by a decomposition
into low-rank submatrices that can be handled efficiently in factorized
form.
$\mathcal{H}^2$-matrices refine this representation following the ideas
of fast multipole methods in order to achieve linear, i.e., optimal
complexity for a variety of important algorithms.

The matrix multiplication, a key component of many more advanced
numerical algorithms, has so far proven tricky:
the only linear-time algorithms known so far either require the very
special structure of HSS-matrices or need to know a suitable basis
for all submatrices in advance.

In this article, a new and fairly general algorithm for multiplying
$\mathcal{H}^2$-matrices in linear complexity with adaptively constructed
bases is presented.
The algorithm consists of two phases: first an intermediate representation
with a generalized block structure is constructed, then this representation
is re-compressed in order to match the structure prescribed by the
application.

The complexity and accuracy are analyzed and numerical experiments
indicate that the new algorithm can indeed be significantly faster
than previous attempts.
\end{abstract}

\section{Introduction}

The matrix multiplication, in the general form $Z \gets Z + X Y$ with
matrices $X$, $Y$, and $Z$, is a central operation of linear algebra
and can be used, e.g., to express inverses, LU and Cholesky
factorizations, orthogonal decompositions, and matrix functions.
While the standard definition leads to a complexity of $\mathcal{O}(n^3)$
for $n$-dimensional matrices, Strassen's famous algorithm \cite{ST69}
reduces the complexity to $\mathcal{O}(n^{\log_2(7)})$, and the exponent
has since been significantly reduced by further work.
Since the resulting $n\times n$ matrix has $n^2$ coefficients, it seems
clear that no algorithm can compute the product of two $n\times n$
matrices explicitly in less than $n^2$ operations.

The method of \emph{hierarchical matrices} \cite{HA99,GRHA02} introduced by
Hackbusch restricts to structured matrices, i.e., by assuming that matrices
can be represented by mosaics of low-rank matrices, and shows that this
can significantly reduce the complexity for the multiplication and inversion.
It has been proven \cite{BOHA02a,BEHA03,GAHAKH00,GAHAKH02,GR01a,GRHAKH02}
that matrices appearing in many important applications, ranging from
elliptic partial differential equations to solutions of matrix equations,
can be approximated accurately in this way, and efficient algorithms have
been developed to compute these approximations efficiently.

Hierarchical matrix algorithms typically require
$\mathcal{O}(n \log n)$ or $\mathcal{O}(n \log^2 n)$ operations.
Combining these techniques with concepts underlying the \emph{fast
multipole method} \cite{RO85,GRRO87,GRRO97} leads to
\emph{$\mathcal{H}^2$-matrices \cite{HAKHSA00,BOHA02,BO10} that
represent low-rank blocks using fixed nested \emph{cluster bases}
and require} only linear complexity for a number of important
algorithms, i.e., they can reach the optimal order of complexity.

While optimal-order algorithms for a number of $\mathcal{H}^2$-matrix
operations have been developed \cite{BO10}, the question of efficient
arithmetic algorithms has been open for decades.
If the block structure is severely restricted, leading to
\emph{hierarchically semi-separable matrices}, the matrix multiplication
and certain factorizations can be implemented in linear complexity
\cite{CHGULY05,CHGUPA06,CHGULIXI09b}, but the corresponding block
structure is not suitable for general applications.
For matrices resulting from one-dimensional fast multipole methods,
a multiplication algorithm has been presented in \cite{LE20}.

For the LU factorization of $\mathcal{H}^2$-matrices, \cite{MAJI18}
offers a promising algorithm, but its treatment of the fill-in appearing
during the elimination process may either lead to reduced accuracy
or increased ranks, and therefore less-than-optimal complexity.
An alternative approach focuses on the efficient parallelization of the
$\mathcal{H}^2$-matrix LU factorization and introduces an interesting
technique for avoiding certain negative effects of fill-in to obtain
impressive experimental results for large matrices \cite{MADEYO22}.

If the cluster bases used to represent the submatrices
of the result of an $\mathcal{H}^2$-matrix multiplication are given a priori,
the matrix multiplication can be performed in linear complexity for
fairly general block structures \cite{BO04a}, but for important
applications like solution operators for partial differential equations,
no practically useful bases are known in general, and
finding these bases by an efficient algorithm in a way that guarantees
a sufficient accuracy is a major challenge.

A key observation underlying all efficient algorithms for
rank-structured matrices is that on the one hand we have to take advantage
of the rank structure, while on the other hand we have to ensure that it
is preserved across all the steps of an algorithm.
The most flexible algorithm developed so far \cite{BORE14} makes use
of the fact that products of $\mathcal{H}^2$-matrices locally have
a \emph{semi-uniform} rank structure that allows us to represent
these products efficiently, and an update procedure can be used
to accumulate the local contributions and approximate the final
result in $\mathcal{H}^2$-matrix form.
Since the current update procedure is fairly general, it reaches
a sub-optimal complexity of only $\mathcal{O}(n \log n)$.

This article presents an entirely new approach to approximating
the product of two $\mathcal{H}^2$-matrices:
it has been known for some time that the product can be represented
\emph{exactly} as an $\mathcal{H}^2$-matrix with a refined block
structure, cf. Section~\ref{se:coarsening}, and
``induced'' cluster bases of significantly
larger local ranks \cite[Chapter~7.8]{BO10}, cf. Section~\ref{se:induced_bases},
but this representation is computationally far too expensive to be
of practical use, although of linear complexity.
In order to obtain a useful approximation of the
product, this intermediate representation has to be re-compressed.
In this paper, two algorithms are presented that together
perform this task.
The first algorithm constructs a low-rank approximation of the product
in a refined block structure arising implicitly during the computation, and
the second algorithm transforms this intermediate block structure into a
coarser block structure prescribed by the user while preserving the given
accuracy.
Splitting the computation into two algorithms is motivated
by the fact that the complexity of the second algorithm depends
\emph{cubically} on the rank, so applying it directly to the uncompressed
induced cluster bases would be far too time-consuming.
The first algorithm significantly reduces the ranks of the cluster bases
without hurting the approximation quality and thereby allows the second
algorithm to work efficiently.

For the simpler structure of hierarchical matrices,
a similar approach has been investigated in \cite{DOHAMU19}:
the exact product is represented by lists of low-rank updates that
are then compressed locally.
For $\mathcal{H}^2$-matrices, the intermediate representation of the
exact result can be made far more efficient than for hierarchical
matrices, but the compression into a final $\mathcal{H}^2$-matrix
is considerably more challenging.

We will see that both algorithms have linear complexity under certain
conditions, so that the entire $\mathcal{H}^2$-matrix multiplication
can be performed in linear complexity, too.
For the first algorithm, a locally adaptive error control strategy
is presented, while for the second algorithm, the relative error
can be controlled reliably for every single block of the resulting
$\mathcal{H}^2$-matrix.
The complexity analysis relies implicitly on the
assumption that the products can be represented by $\mathcal{H}^2$-matrices
with low ranks.

Compared to sampling algorithms \cite{LILUYI11,HATO24,LEMA22},
the compression algorithm presented here can guarantee a given accuracy
and does not rely on special structures like hierarchically semi-separable
matrices.

Numerical experiments show that the entire procedure is signficantly
faster than previous algorithms for hierarchical matrices and indicate
that indeed linear complexity is reached in practice, even for fairly
complicated block structures appearing in the context of boundary element
methods on two-dimensional surfaces in three-dimensional space.

\section{\texorpdfstring{$\mathcal{H}^2$-matrices}{H2-matrices}}

Our algorithm represents densely populated matrices, e.g., corresponding
to solution operators of partial differential equations or boundary integral
operators, by local low-rank approximations.
In the context of many-particle systems, this approach is known as the
\emph{fast multipole method} \cite{RO85,GRRO87,AN92,GRRO97,GIRO02},
its algebraic counterpart are \emph{$\mathcal{H}^2$-matrices}
\cite{HAKHSA00,BOHA02,BO10}, a refined version of the far more general
\emph{hierarchical matrices} \cite{HA99,HA15}.

To introduce $\mathcal{H}^2$-matrices, we follow
the approach of \cite{BOHA02,BO10}.
Let us consider a matrix $G\in\bbbr^{\Idx\times\Jdx}$ with a finite row
index set $\Idx$ and a finite column index set $\Jdx$.
We look for submatrices $G|_{\hat t\times\hat s}$ with
$\hat t\subseteq\Idx$, $\hat s\subseteq\Jdx$ that can be approximated
efficiently.

In order to find these submatrices efficiently, we split the
index sets $\Idx$ and $\Jdx$ hierarchically into \emph{clusters}
organized in trees.
When dealing with integral or partial differential equations, the
clusters will usually correspond to a hierarchical decomposition of
the computational domain.

We denote trees by the symbol $\mathcal{T}$, their nodes by
$t\in\mathcal{T}$, the root by $\treeroot(\mathcal{T})$, and the
children of any node $t\in\mathcal{T}$ by $\chil(t)\subseteq\mathcal{T}$.
We use \emph{labeled} trees, i.e., we associate every node
$t\in\mathcal{T}$ with a label $\hat t$ chosen from a suitable set.

%
%
\begin{definition}[Cluster tree]
Let $\mathcal{T}$ be a tree with root $r:=\treeroot(\mathcal{T})$.
We call $\mathcal{T}$ a \emph{cluster tree} for the index set $\Idx$
if
\begin{itemize}
  \item the root is labeled with the entire index set $\hat r = \Idx$,
  \item the union of the childrens' labels is the parent's label, i.e.,
    \begin{align*}
      \hat t &= \bigcup_{t'\in\chil(t)} \hat t' &
      &\text{ for all } t\in\mathcal{T} \text{ with } \chil(t)\neq\emptyset,
    \end{align*}
  \item siblings are disjoint, i.e.,
    \begin{align*}
      t_1\neq t_2 &\Rightarrow \hat t_1\cap\hat t_2 = \emptyset &
      &\text{ for all } t\in\mathcal{T},\ t_1,t_2\in\chil(t).
    \end{align*}
\end{itemize}
A cluster tree for $\Idx$ is usually denoted by $\ctI$, its leaves
by $\lfI := \{ t\in\ctI\ :\ \chil(t)=\emptyset \}$, and its nodes
are usually called \emph{clusters}.
\end{definition}

Cluster trees for various applications can be constructed
efficiently by a variety of algorithms \cite{HA15,GRKRLE05a}.
Their key property is that they provide us with a systematic way
to split index sets into subsets, allowing us to find submatrices
that can be approximated efficiently.
These submatrices are best also kept in a tree structure to allow
algorithms to quickly navigate through the structures of matrices.

%
%
\begin{definition}[Block tree]
Let $\mathcal{T}$ be a tree with root $r := \treeroot(\mathcal{T})$.
We call $\mathcal{T}$ a \emph{block tree} for two cluster trees
$\ctI$ and $\ctJ$ if
\begin{itemize}
  \item for every $b\in\mathcal{T}$ there are $t\in\ctI$ and
    $s\in\ctJ$ with $b=(t,s)$ and $\hat b = \hat t\times\hat s$,
  \item the root is the pair of the roots of the
    cluster trees, i.e., $r = (\treeroot(\ctI),\treeroot(\ctJ))$,
  \item if $b=(t,s)\in\mathcal{T}$ has children, they are
    \begin{align*}
      \chil(b) &= \begin{cases}
        \chil(t)\times\chil(s) &\text{ if } \chil(t)\neq\emptyset,
                                            \chil(s)\neq\emptyset,\\
        \chil(t)\times\{s\} &\text{ if } \chil(t)\neq\emptyset,
                                         \chil(s)=\emptyset,\\
        \{t\}\times\chil(s) &\text{ if } \chil(t)=\emptyset,
                                         \chil(s)\neq\emptyset.
      \end{cases}
    \end{align*}
\end{itemize}
A block tree for $\ctI$ and $\ctJ$ is usually denoted by
$\ctIJ$, its leaves by $\lfIJ$, and its nodes are usually called
\emph{blocks}.
\end{definition}

We can see that this definition implies
\begin{align*}
  \hat b &= \bigcup_{b'\in\chil(b)} \hat b' &
  &\text{ for all } b\in\ctIJ \text{ with } \chil(b)\neq\emptyset,
\end{align*}
and these unions are disjoint due to the properties of the
cluster trees $\ctI$ and $\ctJ$.
This means that the leaves $\lfIJ$ of a block tree describe
a disjoint partition of $\Idx\times\Jdx$, i.e., a decomposition
of the matrix $G$ into submatrices $G|_{\hat b} = G|_{\hat t\times\hat s}$,
$b=(t,s)\in\lfIJ$.

The central observation of hierarchical matrix methods is that a large
number of these submatrices can be approximated by low-rank matrices.
We call a block $(t,s)$ \emph{admissible} if we expect
to be able to approximate it by a low-rank matrix, and we stop subdividing
blocks in the construction of the block tree as soon as a block is
admissible.
This implies that all non-leaf blocks are inadmissible and the leaf blocks
$\lfIJ$ can be split into admissible leaves $\lfaIJ\subseteq\lfIJ$ and
a remainder $\lfiIJ := \lfIJ\setminus\lfaIJ$ of blocks that are
inadmissible and cannot be split further.
For a hierarchical matrix, we simply assume that every submatrix
$G|_{\hat b}$ for an admissible $b\in\lfaIJ$ can be approximated
by a low-rank matrix.

In order to obtain the higher efficiency of $\mathcal{H}^2$-matrices,
we introduce an additional restriction:
we only admit
\begin{align}\label{eq:vsw}
  G|_{\hat t\times\hat s}
  &= V_t S_{ts} W_s^* &
  &\text{ for all } b=(t,s)\in\lfaIJ,
\end{align}
where $S_{ts}\in\bbbr^{k\times k}$ is a \emph{coupling matrix} with a
small rank $k\in\bbbn$ and $V_t$ and $W_s$ are \emph{cluster bases}
with a specific hierarchical structure.

%
%
\begin{definition}[Cluster basis]
Let $\ctI$ be a cluster tree and $k\in\bbbn$.
A family $(V_t)_{t\in\ctI}$ of matrices satisfying
\begin{align*}
  V_t &\in \bbbr^{\hat t\times k} &
  &\text{ for all } t\in\ctI
\end{align*}
is called a \emph{cluster basis} of rank $k$ for the cluster tree $\ctI$
if for every $t\in\ctI$ and $t'\in\chil(t)$ there is a
\emph{transfer matrix} $E_{t'}\in\bbbr^{k\times k}$ with
\begin{equation}\label{eq:transfer}
  V_t|_{\hat t'\times k} = V_{t'} E_{t'},
\end{equation}
i.e., if the basis for the parent cluster can be expressed in terms of
the bases for its children.
\end{definition}

%
%
\begin{definition}[$\mathcal{H}^2$-matrix]
\label{de:h2matrix}
Let $V=(V_t)_{t\in\ctI}$ and $W=(W_s)_{s\in\ctJ}$ be cluster bases for
$\ctI$ and $\ctJ$.
If (\ref{eq:vsw}) holds for all $b=(t,s)\in\lfaIJ$, we call $G$
an \emph{$\mathcal{H}^2$-matrix} with \emph{row basis} $V$ and
\emph{column basis} $W$.
The matrices $S_{ts}$ are called \emph{coupling matrices}.
\end{definition}

The submatrices $G|_{\hat t\times\hat s}$ for inadmissible leaves
$(t,s)\in\lfiIJ$ are stored without compression, since we may assume
that these matrices are small.

%
%
\begin{example}[Integral operator]
The discretization of integral operators can serve as a motivating
example for the structure of $\mathcal{H}^2$-matrices.
Let $\Omega\subseteq\bbbr^d$ be a domain or manifold, let
$g\colon\Omega\times\Omega\to\bbbr$ be a function.
For a Galerkin discretization, we choose families $(\varphi_i)_{i\in\Idx}$
and $(\psi_j)_{j\in\Jdx}$ of basis functions on $\Omega$ and introduce
the stiffness matrix $G\in\bbbr^{\Idx\times\Jdx}$ by
\begin{align}\label{eq:G_matrix}
  g_{ij} &= \int_\Omega \varphi_i(x) \int_\Omega g(x,y) \psi_j(y) \,dy \,dx &
  &\text{ for all } i\in\Idx,\ j\in\Jdx.
\end{align}
If the function $g$ is sufficiently smooth on a subdomain
$t\times s$ with $t,s\subseteq\Omega$, we can approximate it by
interpolation:
\begin{align*}
  g(x,y) &\approx \sum_{\nu=1}^k \sum_{\mu=1}^k
             g(\xi_{t,\nu},\xi_{s,\mu}) \ell_{t,\nu}(x) \ell_{s,\mu}(y) &
  &\text{ for all } x\in t,\ y\in s,
\end{align*}
where $(\xi_{t,\nu})_{\nu=1}^k$ and $(\xi_{s,\mu})_{\mu=1}^k$ are the
interpolation points in the subdomains $t$ and $s$, respectively,
and $(\ell_{t,\nu})_{\nu=1}^k$ and $(\ell_{s,\mu})_{\mu=1}^k$ are the
corresponding Lagrange polynomials.

Substituting the interpolating polynomial for $g$ in (\ref{eq:G_matrix})
yields
\begin{align*}
  g_{ij} &\approx \sum_{\nu=1}^k \sum_{\mu=1}^k
           \underbrace{\int_\Omega \varphi_i(x) \ell_{t,\nu}(x) \,dx
                      }_{=:v_{t,i\nu}}
           \underbrace{g(\xi_{t,\nu},\xi_{s,\mu})
                      }_{=:s_{ts,\nu\mu}}
           \underbrace{\int_\Omega \psi_j(y) \ell_{s,\mu}(y) \,dy
                      }_{=:w_{s,j\mu}} &
  &\text{ for all } i\in\hat t,\ j\in\hat s,
\end{align*}
where $\hat t := \{ i\in\Idx\ :\ \supp\varphi_i\subseteq t \}$ and
$\hat s := \{ j\in\Jdx\ :\ \supp\psi_j\subseteq s \}$ are the indices
supported in $t$ and $s$, respectively.
This is precisely the representation (\ref{eq:vsw}) required by the
definition of $\mathcal{H}^2$-matrices.

If we use the same order of interpolation for all clusters,
the identity theorem for polynomials yields
\begin{align*}
  \ell_{t,\nu} &= \sum_{\nu'=1}^k
       \underbrace{\ell_{t,\nu}(\xi_{t',\nu'})}_{=:e_{t',\nu'\nu}}
       \ell_{t',\nu'} &
  &\text{ for all } \nu\in\{1,\ldots,k\},
\end{align*}
and this gives rise to (\ref{eq:transfer}).
Interpolation is just one way to obtain $\mathcal{H}^2$-matrices
\cite{BOLOME02,BOSA03}, common alternatives are Taylor expansion
\cite{HAKHSA00} and multipole expansions \cite{RO85,GRRO97}.
\end{example}

An $\mathcal{H}^2$-matrix requires only $\mathcal{O}(n k)$ units
of storage, and the matrix-vector multiplication can be performed
in $\mathcal{O}(n k)$ operations \cite{HAKHSA00,BO10}.
These properties make $\mathcal{H}^2$-matrices very attractive for
handling large dense matrices.

\section{Matrix multiplication and induced bases}
\label{se:induced_bases}

In the following we consider the multiplication of $\mathcal{H}^2$-matrices,
i.e., we assume that $\mathcal{H}^2$-matrices $X\in\bbbr^{\Idx\times\Jdx}$
and $Y\in\bbbr^{\Jdx\times\Kdx}$ with coupling matrices
$(S_{X,ts})_{(t,s)\in\lfaIJ}$ and $(S_{Y,sr})_{(s,r)\in\lfaJK}$ are given
and we are looking to approximate the product $X Y$ efficiently.

We denote the row and column cluster bases of $X$ by
$V_X=(V_{X,t})_{t\in\ctI}$ and $W_X=(W_{X,s})_{s\in\ctJ}$ with
corresponding transfer matrices $(E_{X,t'})_{t'\in\ctI}$ and $(F_{X,s'})_{s'\in\ctJ}$.

The row and column cluster bases of $Y$ are denoted by
$V_Y=(V_{Y,s})_{s\in\ctJ}$ and $W_Y=(W_{Y,r})_{r\in\ctK}$ with
corresponding transfer matrices $(E_{Y,s'})_{s'\in\ctJ}$ and $(F_{Y,r'})_{r'\in\ctK}$.

We will use a recursive approach to compute the product $X Y$ that
considers products of submatrices
\begin{equation}\label{eq:XY}
  X|_{\hat t\times\hat s} Y|_{\hat s\times\hat r}
\end{equation}
with $t\in\ctI$, $s\in\ctJ$, and $r\in\ctK$.
Depending on the relationships between $t$, $s$, and $r$, these
products have to be handled differently.

If $(t,s)$ and $(s,r)$ are elements of the block trees $\ctIJ$ and $\ctJK$,
but not leaves, we can switch to their children $t'\in\chil(t)$,
$s'\in\chil(s)$, $r'\in\chil(r)$ and construct the product (\ref{eq:XY})
from the sub-products
\begin{equation*}
  X|_{\hat t'\times\hat s'} Y|_{\hat s'\times\hat r'}.
\end{equation*}
This leads to a simple recursive algorithm.

The situation changes if $(t,s)$ or $(s,r)$ are leaves.
Assume that $(s,r)\in\lfaJK$ is an admissible leaf.
Our Definition~\ref{de:h2matrix} yields
\begin{equation*}
  Y|_{\hat s\times\hat r} = V_{Y,s} S_{Y,sr} W_{Y,r}^*,
\end{equation*}
and the product (\ref{eq:XY}) takes the form
\begin{equation}\label{eq:semiuniform}
  X|_{\hat t\times\hat s} Y|_{\hat s\times\hat r}
  = \underbrace{X|_{\hat t\times\hat s} V_{Y,s}}_{A_{ts}} S_{Y,sr} W_{Y,r}^*
  = A_{ts} S_{Y,sr} W_{Y,r}^*.
\end{equation}
This equation reminds us of the defining equation (\ref{eq:vsw}) of
$\mathcal{H}^2$-matrices, but the indices do not match.
Formally, we can fix this by collecting \emph{all} matrices $A_{ts}$
with $b=(t,s)\in\lfiIJ$ in a new cluster basis for the cluster $t$.
If we include $V_{X,t}$, we can also cover the case
$b=(t,s)\in\lfaIJ$ using (\ref{eq:vsw}).

%
%
\begin{definition}[Induced row cluster basis]
\label{de:induced_row}
For every $t\in\ctI$ let
\begin{equation*}
  \mathcal{C}_t := \{ s\in\ctJ\ :\ (t,s)\in\ctIJ \text{ inadmissible} \}
\end{equation*}
denote the set of column clusters of inadmissible blocks with the row
cluster $t$.
Combining all matrices $A_{ts}$ for all $s\in\mathcal{C}_t$ from
(\ref{eq:semiuniform}) yields
\begin{equation*}
  V_t := \begin{pmatrix}
            V_{X,t} & X|_{\hat t\times\hat s_1} V_{Y,s_1} &
            \ldots & X|_{\hat t\times\hat s_m} V_{Y,s_m}
         \end{pmatrix}
\end{equation*}
where we have enumerated $\mathcal{C}_t = \{s_1,\ldots,s_m\}$ for the
sake of simplicity.

This is again a cluster basis \cite[Chapter~7.8]{BO10}, although the
associated ranks are generally far higher than for $V_X$ or $V_Y$.
\end{definition}

By padding $S_{Y,sr}$ in (\ref{eq:semiuniform}) with rows of zeros
to obtain a matrix $S_{tr}$ such that $X|_{\hat t\times\hat s} V_{Y,s}
S_{Y,sr} = V_t S_{tr}$, we find
\begin{equation*}
  X|_{\hat t\times\hat s} Y|_{\hat s\times\hat r} = V_t S_{tr} W_{Y,r}^*,
\end{equation*}
i.e., we have constructed a factorized representation as in (\ref{eq:vsw}).

We can use a similar approach if $(t,s)\in\lfaIJ$ is an admissible
leaf.
Definition~\ref{de:h2matrix} gives us the factorized representation
\begin{gather}
  X|_{\hat t\times\hat s} = V_{X,t} S_{X,ts} W_{X,s}^*,\notag\\
  X|_{\hat t\times\hat s} Y|_{\hat s\times\hat r}
  = V_{X,t} S_{X,ts} W_{X,s}^* Y|_{\hat s\times\hat r}
  = V_{X,t} S_{X,ts}
    (\underbrace{Y|_{\hat s\times\hat r}^*  W_{X,s}}_{=:B_{sr}})^*
  = V_{X,t} S_{X,ts} B_{sr}^*.\label{eq:semiuniform2}
\end{gather}
Now we collect all matrices $B_{sr}$ in a new column cluster basis for
the cluster $r\in\ctK$ and also include $W_{Y,r}$ to cover $b=(s,r)\in\lfaJK$.

%
%
\begin{definition}[Induced column cluster basis]
\label{de:induced_col}
For every $r\in\ctK$ let
\begin{equation*}
  \mathcal{C}_r := \{ s\in\ctJ\ :\ (s,r)\in\ctJK \text{ inadmissible} \}
\end{equation*}
denote the set of row clusters of inadmissible blocks with the column
cluster $r$.
Combining all matrices $B_{sr}$ for all $s\in\mathcal{C}_r$ from
(\ref{eq:semiuniform2}) yields
\begin{equation*}
  W_r := \begin{pmatrix}
            W_{Y,r} & Y|_{\hat s_1\times\hat r}^* W_{X,s_1} &
            \ldots & Y|_{\hat s_m\times\hat r}^* W_{X,s_m}
         \end{pmatrix}
\end{equation*}
where we have enumerated $\mathcal{C}_r = \{s_1,\ldots,s_m\}$ for the
sake of simplicity.

This is again a cluster basis \cite[Chapter~7.8]{BO10}, although the
associated ranks are generally far higher than for $W_X$ or $W_Y$.
\end{definition}

As before, we can construct $S_{tr}$ by padding $S_{X,ts}$
in (\ref{eq:semiuniform2}) with columns of zeros to get
$S_{X,ts} W_{X,s}^* Y|_{\hat s\times\hat r} = S_{tr} W_r^*$ and obtain
\begin{equation*}
  X|_{\hat t\times\hat s} Y|_{\hat s\times\hat r}
  = V_{X,t} S_{tr} W_r^*,
\end{equation*}
i.e., a low-rank factorized representation of the product.

Using the induced row and column bases, we can represent the product
$XY$ \emph{exactly} as an $\mathcal{H}^2$-matrix by simply splitting
products $X|_{\hat t\times\hat s} Y|_{\hat s\times\hat r}$ until
$(t,s)$ or $(s,r)$ are admissible and we can represent the product
in the new bases.
If we reach leaves of the cluster tree, we can afford to store the product
directly without looking for a factorization.

Unless we are restricting our attention to very simple block
structures \cite{HAKHKR04,CHGULY05}, this $\mathcal{H}^2$-matrix
representation of the product
\begin{itemize}
  \item requires ranks that are far too high and
  \item far too many blocks to be practically useful.
\end{itemize}
The goal for this article is to present algorithms for
efficiently constructing an approximation of the product using
optimized cluster bases and a prescribed block structure.
In the next section, we will investigate how the induced cluster
bases can be compressed.
The following section is then dedicated to coarsening the block
structure.

\section{Compressed induced cluster bases}

We focus on the induced row cluster basis introduced in
Definition~\ref{de:induced_row}, since the induced column cluster
basis has a very similar structure and the adaptation of the
present algorithm is straightforward.

Our goal is to find a cluster basis $(Q_t)_{t\in\ctI}$ that can
approximate all products (\ref{eq:semiuniform}) sufficiently
well.
In order to avoid redundancies and to allow elegant error estimates,
we focus on \emph{isometric} cluster bases.

%
%
\begin{definition}[Isometric cluster basis]
A cluster basis $(Q_t)_{t\in\ctI}$ is called \emph{isometric} if
\begin{align*}
  Q_t^* Q_t &= I &
  &\text{ for all } t\in\ctI.
\end{align*}
\end{definition}

For an isometric cluster basis, the best approximation of a
matrix in the range of this basis is given by the orthogonal
projection $Q_t Q_t^*$.
In our case, we want to approximate the products (\ref{eq:semiuniform}),
i.e., we require
\begin{align*}
  X|_{\hat t\times\hat s} Y|_{\hat s\times\hat r}
  &\approx Q_t Q_t^* X|_{\hat t\times\hat s} Y|_{\hat s\times\hat r} &
  &\text{ for all } s\in\mathcal{C}_t,\ (s,r)\in\lfaJK.
\end{align*}
Due to $(s,r)\in\lfaJK$, we have $Y|_{\hat s\times\hat r} = V_{Y,s}
S_{Y,sr} W_{Y,r}^*$ and therefore
\begin{align}\label{eq:original_product}
  X|_{\hat t\times\hat s} V_{Y,s} S_{Y,sr} W_{Y,r}^*
  &\approx Q_t Q_t^* X|_{\hat t\times\hat s} V_{Y,s} S_{Y,sr} W_{Y,r}^* &
  &\text{ for all } s\in\mathcal{C}_t,\ (s,r)\in\lfaJK.
\end{align}
Computing the original matrix $X|_{\hat t\times\hat s} V_{Y,s} S_{Y,sr}
W_{Y,r}^*$ directly would be far too computationally
expensive, since $W_{Y,r}$ has a large number of rows if $\hat r$ is
large.

This is where \emph{condensation} is a useful strategy:
Applying an orthogonal transformation to the equation from the right
does not change the approximation properties, and taking advantage of
the fact that $W_{Y,r}$ has only $k$ columns allows us to significantly
reduce the matrix dimension without any effect on the approximation
quality.

%
%
\begin{definition}[Basis weight]
\label{de:basis_weights}
For every $r\in\ctK$, there are a matrix $R_{Y,r}\in\bbbr^{\ell\times k}$
and an isometric matrix $Q_{Y,r}\in\bbbr^{\hat r\times\ell}$ such that
$W_{Y,r} = Q_{Y,r} R_{Y,r}$ and $\ell=\min\{k,\#\hat r\}$.

The matrices $(R_{Y,r})_{r\in\ctK}$ are called the \emph{basis weights}
for the cluster basis $(W_{Y,r})_{r\in\ctK}$.
\end{definition}

%
%
\begin{figure}
  \begin{quotation}
    \begin{tabbing}
      \textbf{procedure} basis\_weights($r$);\\
      \textbf{begin}\\
      \quad\= \textbf{if} $\chil(r)=\emptyset$ \textbf{then}\\
      \> \quad\= Compute a thin Householder
                    factorization $Q_{Y,r} R_{Y,r} = W_{Y,r}$\\
      \> \textbf{else begin}\\
      \> \> \textbf{for} $r'\in\chil(r)$ \textbf{do}
               basis\_weights($r'$);\\
      \> \> $\widehat{W}_{Y,r} \gets \begin{pmatrix}
               R_{Y,r_1} F_{Y,r_1}^*\\
               \vdots\\
               R_{Y,r_c} F_{Y,r_c}^*
             \end{pmatrix}$ with $\chil(r)=\{r_1,\ldots,r_c\}$;\\
      \> \> Compute a thin Householder
              factorization $\widehat{Q}_{Y,r} R_{Y,r} = \widehat{W}_{Y,r}$\\
      \> \textbf{end}\\
      \textbf{end}
    \end{tabbing}
  \end{quotation}
  \caption{Construction of the basis weights for the basis $W_Y$}
  \label{fi:basis_weights}
\end{figure}

The basis weights can be efficiently computed in $\mathcal{O}(n k^2)$
operations by a recursive algorithm \cite[Chapter~5.4]{BO10},
cf. Figure~\ref{fi:basis_weights}:
in the leaves, we compute the thin Householder factorization
$W_{Y,r} = Q_{Y,r} R_{Y,r}$ directly.
If $r$ has children, let's say $\chil(r)=\{r_1',r_2'\}$, we assume that
the factorizations for the children have already been computed and observe
\begin{equation*}
  W_{Y,r}
  = \begin{pmatrix}
      W_{Y,r_1'} F_{Y,r_1'}\\
      W_{Y,r_2'} F_{Y,r_2'}
    \end{pmatrix}
  = \begin{pmatrix}
      Q_{Y,r_1'} R_{Y,r_1'} F_{Y,r_1'}\\
      Q_{Y,r_2'} R_{Y,r_2'} F_{Y,r_2'}
    \end{pmatrix}
  = \begin{pmatrix}
      Q_{Y,r_1'} & \\
      & Q_{Y,r_2'}
    \end{pmatrix}
    \underbrace{\begin{pmatrix}
      R_{Y,r_1'} F_{Y,r_1'}\\
      R_{Y,r_2'} F_{Y,r_2'}
    \end{pmatrix}}_{=:\widehat{W}_{Y,r}}.
\end{equation*}
Here $F_{Y,r_1'}$ and $F_{Y,r_2'}$ denote the transfer
matrices for the children $\chil(r)=\{r_1',r_2'\}$ and the matrix $W_{Y,r}$.
The left factor is already isometric, and with the thin Householder
factorization $\widehat{W}_{Y,r} = \widehat{Q}_{Y,r} R_{Y,r}$ of the
small matrix $\widehat{W}_{Y,r}$ we find
\begin{equation*}
  W_{Y,r}
  = \underbrace{\begin{pmatrix}
      Q_{Y,r_1'} & \\
      & Q_{Y,r_2'}
    \end{pmatrix} \widehat{Q}_{Y,r}}_{=:Q_{Y,r}} R_{Y,r} = Q_{Y,r} R_{Y,r}.
\end{equation*}
Since the matrices $Q_{Y,r}$ associated with the basis weight are
isometric, we have
\begin{equation*}
  \|(I - Q_t Q_t^*) X|_{\hat t\times\hat s} V_{Y,s} S_{Y,sr} W_{Y,r}^*\|_2
  = \|(I - Q_t Q_t^*) X|_{\hat t\times\hat s} V_{Y,s} S_{Y,sr} R_{Y,r}^*\|_2
\end{equation*}
and can replace $W_{Y,r}^*$ by $R_{Y,r}^*$ in (\ref{eq:original_product})
to obtain the new task of finding $Q_t$ with
\begin{align}\label{eq:condense_R}
  X|_{\hat t\times\hat s} V_{Y,s} S_{Y,sr} R_{Y,r}^*
  &\approx Q_t Q_t^* X|_{\hat t\times\hat s} V_{Y,s} S_{Y,sr} R_{Y,r}^* &
  &\text{ for all } s\in\mathcal{C}_t,\ (s,r)\in\lfaJK.
\end{align}
Each of these matrices has only at most $k$ columns, so we should be able
to handle the computation efficiently.

We can even go one step further:
just as we have taken advantage of the fact that $W_{Y,r}$ has only
$k$ columns, we can also use the fact that $V_{Y,s}$ also has only
$k$ columns.
To this end, we collect the column clusters of all admissible
blocks $(s,r)\in\lfaJK$ in a set
\begin{equation*}
  \brow(s) := \{ r\in\ctK\ :\ (s,r)\in\lfaJK \},
\end{equation*}
and enumerating $\brow(s) = \{ r_1, \ldots, r_\ell \}$ we can
introduce
\begin{equation}\label{eq:Gs_condensation}
  G_s := \begin{pmatrix}
    R_{Y,r_1} S_{Y,sr_1}^*\\
    \vdots\\
    R_{Y,r_\ell} S_{Y,sr_\ell}^*
  \end{pmatrix}
\end{equation}
and see that
\begin{align}\label{eq:condense_A}
  X|_{\hat t\times\hat s} V_{Y,s} G_s^*
  &\approx Q_t Q_t^* X|_{\hat t\times\hat s} V_{Y,s} G_s^* &
  &\text{ for all } s\in\mathcal{C}_t
\end{align}
is equivalent with (\ref{eq:condense_R}), since all submatrices of
(\ref{eq:condense_R}) also appear in (\ref{eq:condense_A}).

Since $V_{Y,s}$, and therefore also $G_s$, has only $k$
columns, we can use a thin Householder factorization to find an isometric
matrix $P_s$ and a small matrix $Z_s\in\bbbr^{\ell\times k}$ with
$G_s = P_s Z_s$, $\ell\leq k$.
Again we exploit the fact that we can apply orthogonal transformations
from the right without changing the approximation properties to obtain
\begin{align}\label{eq:condense_Z}
  X|_{\hat t\times\hat s} V_{Y,s} Z_s^*
  &\approx Q_t Q_t^* X|_{\hat t\times\hat s} V_{Y,s} Z_s^* &
  &\text{ for all } s\in\mathcal{C}_t.
\end{align}
This is still equivalent with (\ref{eq:condense_A}), but now
we only have one matrix with not more than $k$ columns for every
$s\in\mathcal{C}_t$.
If we used this formulation directly to construct $Q_t$, we would
violate the condition (\ref{eq:transfer}), since $Q_t$ would only
take care of submatrices connected directly to the cluster $t$, but
not to its ancestors.

Fortunately, this problem can be fixed easily by including the
ancestors' contributions by a recursive procedure:
with a top-down recursion, we can assume that $P_{s^+}$ and $Z_{s^+}$
for the parent $s^+$ and all ancestors of a cluster $s$ have already
been computed, and we can include the weight for the parent in the
new matrix
\begin{equation*}
  \widehat{G}_s := \begin{pmatrix}
    Z_{s^+} E_{Y,s}^*\\
    G_s
  \end{pmatrix}.
\end{equation*}
Multiplying the first block with $P_{s^+}$ will give us all admissible
blocks connected to ancestors of $s$, while the second block adds
the admissible blocks connected directly to $s$.
The resulting weight matrices $(Z_s)_{s\in\ctJ}$ are known as the
\emph{total weights} \cite[Chapter~6.6]{BO10} of the cluster
basis $(V_{Y,s})_{s\in\ctJ}$, since they measure how important the
different basis vectors are for the approximation of the entire matrix.

%
%
\begin{figure}
  \begin{quotation}
    \begin{tabbing}
      \textbf{procedure} total\_weights($s$, $Z_{s^+}$);\\
      \textbf{begin}\\
      \quad\= $\widehat{G}_s \gets
               \begin{pmatrix}
                 Z_{s^+} E_{Y,s}^*\\
                 R_{Y,r_1} S_{Y,sr_1}^*\\
                 \vdots\\
                 R_{Y,r_\ell} Y_{Y,sr_\ell}^*
               \end{pmatrix}$ with
               $\{ r_1,\ldots,r_\ell \} = \brow(s)$;\\
      \> Compute a thin Householder
               factorization $P_s Z_s = \widehat{G}_s$;\\
      \> \textbf{for} $s'\in\chil(s)$ \textbf{do}
           total\_weights($s'$, $Z_s$)\\
      \textbf{end}
    \end{tabbing}
  \end{quotation}
  \caption{Construction of the total weights for the matrix $Y$}
  \label{fi:total_weights}
\end{figure}

%
%
\begin{definition}[Total weights]
\label{de:total_weights}
There are a family $(Z_s)_{s\in\ctJ}$ of matrices with $k$ columns and
no more than $k$ rows and a family $(P_s)_{s\in\ctJ}$ of isometric matrices
such that
\begin{align*}
  P_s Z_s &= \begin{cases}
    G_s &\text{ if } s \text{ is the root}\\
    \widehat{G}_s &\text{ otherwise}
  \end{cases} &
  &\text{ for all } s\in\ctJ.
\end{align*}
The matrices $(Z_s)_{s\in\ctJ}$ are called the \emph{total weights} for
the cluster basis $(V_{Y,s})_{s\in\ctJ}$ and the matrix $Y$.
\end{definition}

As mentioned above, the total weights can be computed by a top-down
recursion starting at the root and working towards the leaves of the
cluster tree $\ctJ$.
Under standard assumptions, this requires $\mathcal{O}(n k^2)$ operations
\cite[Algorithm~28]{BO10}, cf. Figure~\ref{fi:total_weights}.

Using basis weights and total weights, we have sufficiently reduced the
dimension of the matrices to develop the compression algorithm.

\paragraph*{Leaf clusters.}
We first consider the special case that $t\in\ctI$ is a leaf cluster.
In this case we may assume that $\hat t$ contains only a small number
of indices, so we can afford to set up the matrix
\begin{equation}\label{eq:induced_V}
  V_t := \begin{pmatrix}
    V_{X,t} &
    X|_{\hat t\times\hat s_1} V_{Y,s_1} Z_{s_1}^* & \ldots &
    X|_{\hat t\times\hat s_m} V_{Y,s_m} Z_{s_m}^*
  \end{pmatrix}
\end{equation}
directly, where we again enumerate $\mathcal{C}_t = \{ s_1,\ldots,s_m \}$
for ease of presentation.

We are looking for a low-rank approximation of this matrix that
can be used to treat sub-products appearing in the multiplication
algorithm.
Since we want to guarantee a given accuracy, the first block $V_{X,t}$
poses a challenge:
in expressions like (\ref{eq:semiuniform2}), it will be multiplied
by potentially hierarchically structured matrices $S_{X,ts} B_{sr}^*$, and
deriving a suitable weight matrix could be very complicated.

We solve this problem by simply ensuring that the range of $V_{X,t}$ is
left untouched by our approximation.
We find a thin Householder factorization
\begin{equation*}
  V_{X,t} = Q_{X,t} \begin{pmatrix} R_{X,t}\\ 0 \end{pmatrix}
\end{equation*}
with a $k_1\times k$ matrix $R_{X,t}$, $k_1=\min\{k,\#\hat t\}$, and
an orthonormal matrix $Q_{X,t}$.
Multiplying the entire matrix $V_t$ with $Q_{X,t}^*$ yields
\begin{equation*}
  Q_{X,t}^* V_t
  = \begin{pmatrix}
      R_{X,t} & M_{1,1} & \ldots & M_{1,m}\\
      0 & M_{2,1} & \ldots & M_{2,m}
    \end{pmatrix}
\end{equation*}
with submatrices given by
\begin{align*}
  Q_{X,t}^* X|_{\hat t\times\hat s_i} V_{Y,s_i} Z_{s_i}^*
  &= \begin{pmatrix}
       M_{1,i}\\ M_{2,i}
     \end{pmatrix} &
  &\text{ for all } i\in\{1,\ldots,m\}.
\end{align*}
Now we compute the singular value decomposition of the remainder
\begin{equation}\label{eq:induced_Vtilde}
  \widetilde{V}_t :=
  \begin{pmatrix}
    M_{2,1} & \ldots & M_{2,m}
  \end{pmatrix}
\end{equation}
and choose $k_2$ left singular vectors of $\widetilde{V}_t$ as the columns
of an isometric matrix $\widetilde{Q}_t$ to ensure
\begin{equation*}
  \widetilde{Q}_t \widetilde{Q}_t^* \widetilde{V}_t
  \approx \widetilde{V}_t.
\end{equation*}
We let
\begin{equation*}
  Q_t := Q_{X,t} \begin{pmatrix} I & 0 \\ 0 & \widetilde{Q}_t \end{pmatrix}
\end{equation*}
and observe
\begin{equation*}
  Q_t Q_t^* V_t
  = Q_{X,t} \begin{pmatrix}
               I & 0\\ 0 & \widetilde{Q}_t \widetilde{Q}_t^*
            \end{pmatrix} Q_{X,t}^* V_t
  = Q_{X,t} \begin{pmatrix}
               R_{X,t} & M_{1,1} & \ldots & M_{1,m}\\
               0 & \widetilde{Q}_t \widetilde{Q}_t^* M_{2,1} & \ldots &
               \widetilde{Q}_t \widetilde{Q}_t^* M_{2,m}
           \end{pmatrix},
\end{equation*}
so the matrix $V_{X,t}$ in the first $k$ columns of $V_t$ is indeed left
untouched, while the remainder of the matrix is approximated according to
the chosen singular vectors.
The rank of the new basis matrix $Q_t$ is now given by $k_t := k_1+k_2$.

In order to facilitate the next step of the procedure, we prepare the
auxiliary matrices
\begin{align*}
  A_{t,s} &:= Q_t^* X|_{\hat t\times\hat s} V_{Y,s} &
  &\text{ for all } s\in\mathcal{C}_t
\end{align*}
and keep the matrix $R_{X,t}$ describing the change of basis from
$V_{X,t}$ to $Q_t$.

\paragraph*{Non-leaf clusters.}
Now we consider the case that $t\in\ctI$ is not a leaf.
For ease of presentation, we assume that $t$ has exactly two children
$\chil(t)=\{t_1',t_2'\}$.
We handle this case by recursion and assume that the matrices $Q_{t'}$,
$R_{X,t'}$, and $A_{t',s'}$ for all children $t'\in\chil(t)$ and
$s'\in\mathcal{C}_{t'}$ have already been computed.

Since the new basis has to be nested according to (\ref{eq:transfer}),
we are not free to choose any matrix $Q_t$ in this case, but we have to
ensure that
\begin{equation}\label{eq:compression_nonleaf}
  Q_t = \begin{pmatrix}
          Q_{t_1'} E_{t_1'}\\
          Q_{t_2'} E_{t_2'}
        \end{pmatrix}
  = \underbrace{\begin{pmatrix}
                   Q_{t_1'} & \\
                   & Q_{t_2'}
                \end{pmatrix}}_{=: U_t}
    \underbrace{\begin{pmatrix}
                   E_{t_1'}\\ E_{t_2'}
                \end{pmatrix}}_{=: \widehat{Q}_t} = U_t \widehat{Q}_t
\end{equation}
holds with suitable transfer matrices $E_{t_1'}$ and $E_{t_2'}$.
We want $Q_t$ to be isometric, and since $Q_{t_1'}$ and $Q_{t_2'}$ can
be assumed to be isometric already, we have to ensure
\begin{equation*}
  I = Q_t^* Q_t = \widehat{Q}_t^* U_t^* U_t \widehat{Q}_t
    = \widehat{Q}_t^*
      \begin{pmatrix}
        Q_{t_1'}^* Q_{t_1'} & \\
        & Q_{t_2'}^* Q_{t_2'}
      \end{pmatrix}
      \widehat{Q}_t = \widehat{Q}_t^* \widehat{Q}_t.
\end{equation*}
Following \cite[Chapter~6.4]{BO10}, we can only approximate what can be
represented in the children's bases, i.e., in the range of $U_t$, so we
replace $V_t$ by its orthogonal projection
\begin{equation}\label{eq:induced_Vhat}
  \widehat{V}_t := U_t^* V_t
  = \begin{pmatrix}
      U_t^* V_{X,t} &
      U_t^* X|_{\hat t\times\hat s_1} V_{Y,s_1} Z_{s_1}^* &
      \ldots &
      U_t^* X|_{\hat t\times\hat s_m} V_{Y,s_m} Z_{s_m}^*
    \end{pmatrix}.
\end{equation}
Setting up the first matrix is straightforward, since we have
$R_{X,t_1'}$ and $R_{X,t_2'}$ at our disposal and obtain
\begin{equation*}
  \widehat{V}_{X,t} := U_t^* V_{X,t}
  = \begin{pmatrix}
      Q_{t_1'}^* & \\
      & Q_{t_2'}^*
    \end{pmatrix}
    \begin{pmatrix}
      V_{X,t_1'} E_{X,t_1'}\\
      V_{X,t_2'} E_{X,t_2'}
    \end{pmatrix}
  = \begin{pmatrix}
      R_{X,t_1'} E_{X,t_1'}\\
      0\\
      R_{X,t_2'} E_{X,t_2'}\\
      0
    \end{pmatrix},
\end{equation*}
so this block can be computed in $\mathcal{O}(k^3)$ operations.

Dealing with the remaining blocks is a little more challenging.
Let $s\in\mathcal{C}_t$.
By definition $(t,s)\in\ctIJ$ is not admissible, so $X|_{\hat t\times\hat s}$
has to be divided into submatrices.
For ease of presentation, we assume that we have $\chil(s)=\{s_1',s_2'\}$ and
therefore
\begin{equation*}
  X|_{\hat t\times\hat s}
  = \begin{pmatrix}
       X|_{\hat t_1'\times\hat s_1'} &
       X|_{\hat t_1'\times\hat s_2'}\\
       X|_{\hat t_2'\times\hat s_1'} &
       X|_{\hat t_2'\times\hat s_2'}
    \end{pmatrix}.
\end{equation*}
If $(t',s')\in\chil(t)\times\chil(s)$ is admissible, we can use the
basis-change $R_{X,t'}$ to get
\begin{align*}
  X|_{\hat t'\times\hat s'}
  &= V_{X,t'} S_{X,t's'} W_{X,s'}^*,\\
  A_{t',s'} := Q_{t'}^* X|_{\hat t'\times\hat s'} V_{Y,\hat s'}
  &= \begin{pmatrix}
       R_{X,t'} S_{X,t's'} W_{X,s'}^* V_{Y,s'}\\
       0
     \end{pmatrix}.
\end{align*}
To compute these matrices efficiently, we require the
\emph{cluster basis products}
\begin{align*}
  P_{XY,s'} &:= W_{X,s'}^* V_{Y,s'} &
  &\text{ for all } s'\in\ctJ
\end{align*}
that can be computed in advance by a simple recursive procedure
\cite[Chapter~5.3]{BO10} in $\mathcal{O}(n k^2)$ operations so that
the computation of
\begin{equation*}
  A_{t',s'} := \begin{pmatrix}
                 R_{X,t'} S_{X,t's'} P_{XY,s'}\\
                 0
              \end{pmatrix}
\end{equation*}
requires only $\mathcal{O}(k^3)$ operations.

If, on the other hand, $(t',s')\in\chil(t)\times\chil(s)$ is not admissible,
we have $s'\in\mathcal{C}_{t'}$ by definition and therefore can rely on
the matrix $A_{t',s'} = Q_{t'}^* X|_{\hat t'\times\hat s'} V_{Y,s'}$ to have been
prepared during the recursion for $t'\in\chil(t)$.

Once all submatrices are at our disposal, we can use
\begin{align}
  \widehat{A}_{t,s} &:= U_t^* X|_{\hat t\times\hat s} V_{Y,s}
   = \begin{pmatrix}
       Q_{t_1'}^* & \\
       & Q_{t_2'}^*
     \end{pmatrix}
     \begin{pmatrix}
       X|_{\hat t_1'\times\hat s_1'} &
       X|_{\hat t_1'\times\hat s_2'} \\
       X|_{\hat t_2'\times\hat s_1'} &
       X|_{\hat t_2'\times\hat s_2'}
     \end{pmatrix}
     \begin{pmatrix}
       V_{Y,s_1'} E_{Y,s_1'}\\
       V_{Y,s_2'} E_{Y,s_2'}
     \end{pmatrix}\notag\\
  &= \begin{pmatrix}
       Q_{t_1'}^* X|_{\hat t_1'\times\hat s_1'} V_{Y,s_1'} &
       Q_{t_1'}^* X|_{\hat t_1'\times\hat s_2'} V_{Y,s_2'}\\
       Q_{t_2'}^* X|_{\hat t_2'\times\hat s_1'} V_{Y,s_1'} &
       Q_{t_2'}^* X|_{\hat t_2'\times\hat s_2'} V_{Y,s_2'}
     \end{pmatrix}
     \begin{pmatrix}
       E_{Y,s_1'}\\ E_{Y,s_1'}
     \end{pmatrix}
   = \begin{pmatrix}
       A_{t_1',s_1'} & A_{t_1',s_2'}\\
       A_{t_2',s_1'} & A_{t_2',s_2'}
     \end{pmatrix}
     \begin{pmatrix}
       E_{Y,s_1'}\\
       E_{Y,s_2'}
     \end{pmatrix}\label{eq:induced_Ahat}
\end{align}
to set up the entire matrix
\begin{equation*}
  \widehat{V}_t = \begin{pmatrix}
    \widehat{V}_{X,t} &
    \widehat{A}_{t,s_1} Z_{s_1}^* & \ldots &
    \widehat{A}_{t,s_m} Z_{s_m}^*
  \end{pmatrix}
\end{equation*}
in $\mathcal{O}(k^3)$ operations as long as $m$ is bounded.

As in the case of leaf matrices, we want to preserve the row cluster
basis $V_{X,t}$ exactly, since we do not have reliable weights available
for this part of the matrix.
We can approach this task as before: we construct a Householder
factorization
\begin{equation*}
  \widehat{V}_{X,t} = \widehat{Q}_{X,t}
    \begin{pmatrix} R_{X,t}\\ 0 \end{pmatrix},
\end{equation*}
of $\widehat{V}_{X,t}$,
transform $\widehat{V}_t$ to $\widehat{Q}_{X,t}^* \widehat{V}_t$, and
apply approximation only to the lower half of the latter matrix.
This yields an isometric matrix $\widehat{Q}_t$ with
$\widehat{Q}_t \widehat{Q}_t^* \widehat{V}_{X,t} = \widehat{V}_{X,t}$
and $\widehat{Q}_t \widehat{Q}_t^* \widehat{V}_t \approx \widehat{V}_t$.
According to (\ref{eq:compression_nonleaf}), we can extract the
transfer matrices from $\widehat{Q}_t$ and define $Q_t$.

In order to satisfy the requirements of the recursion, we also
have to prepare the matrices $A_{t,s}$ for $s\in\mathcal{C}_t$.
Due to
\begin{align*}
  A_{t,s} &= Q_t^* X|_{\hat t\times\hat s} V_{Y,s}
  = \widehat{Q}_t^* U_t^* X|_{\hat t\times\hat s} V_{Y,s}
  = \widehat{Q}_t^* \begin{pmatrix}
      A_{t_1',s_1'} & A_{t_1',s_2'}\\
      A_{t_2',s_1'} & A_{t_2',s_2'}
    \end{pmatrix}
    \begin{pmatrix}
      E_{Y,s_1'}\\ E_{Y,s_2'}
    \end{pmatrix} = \widehat{Q}_t^* \widehat{A}_{t,s}
\end{align*}
this task can also be accomplished in $\mathcal{O}(k^3)$ operations.

%
%
\begin{figure}
  \begin{quotation}
    \begin{tabbing}
      \textbf{procedure} induced\_basis($t$);\\
      \textbf{begin}\\
      \quad\= \textbf{if} $\chil(t)=\emptyset$ \textbf{then begin}\\
      \> \quad\= Set up $V_t$ as in (\ref{eq:induced_V});\\
      \> \> Construct a thin Householder
           factorization $Q_{X,t} R_{X,t} = V_{X,t}$;\\
      \> \> Compute $Q_{X,t}^* V_t$ and set up $\widetilde{V}_t$ as in
           (\ref{eq:induced_Vtilde});\\
      \> \> Compute the singular value decomposition of $\widetilde{V}_t$
           and choose a rank $k\in\bbbn$;\\
      \> \> Build $Q_t$ from the first $k$ left singular vectors;\\
      \> \> \textbf{for} $s\in\mathcal{C}_t$ \textbf{do}
           $A_{t,s} \gets Q_t^* X|_{\hat t\times\hat s} V_{Y,s}$\\
      \> \textbf{end}\\
      \> \textbf{else begin}\\
      \> \> \textbf{for} $t'\in\chil(t)$ \textbf{do}
               induced\_basis($t'$);\\
      \> \> Set up $\widehat{A}_{t,s}$ and $\widehat{V}_t$ as
               in (\ref{eq:induced_Ahat}) and (\ref{eq:induced_Vhat});\\
      \> \> Construct a thin Householder
              factorization $\widehat{Q}_{X,t} R_{X,t} = \widehat{V}_{X,t}$;\\
      \> \> Compute $\widehat{Q}_{X,t}^* \widehat{V}_t$ and set up
              $\widetilde{V}_t$;\\
      \> \> Compute the singular value decomposition of $\widetilde{V}_t$
              and choose a rank $k\in\bbbn$;\\
      \> \> Build $\widehat{Q}_t$ from the first $k$ left singular vectors
              and set transfer matrices;\\
      \> \> \textbf{for} $s\in\mathcal{C}_t$ \textbf{do}
               $A_{t,s} \gets \widehat{Q}_t^* \widehat{A}_{t,s}$\\
      \> \textbf{end}\\
      \textbf{end}
    \end{tabbing}
  \end{quotation}

  \caption{Construction of a compressed induced row basis for an
     $\mathcal{H}^2$-matrix product}
  \label{fi:induced_basis}
\end{figure}

\section{Error control}
\label{se:error_control}

Let us consider a block $B_{tsr} = X|_{\hat t\times\hat s} Y|_{\hat s\times\hat r}$
of the product where $(s,r)\in\lfaJK$ is an admissible leaf.
We are interested in controlling the error
\begin{equation*}
  \|B_{tsr} - Q_t Q_t^* B_{tsr}\|_2
\end{equation*}
introduced by switching to the compressed induced row basis $Q_t$.

Since the errors introduced by the compression algorithm are
pairwise orthogonal \cite[Theorem~6.16]{BO10}, we can easily
obtain error estimates for individual blocks by controlling the
errors incurred for this block on all levels of the algorithm
\cite[Chapter~6.8]{BO10}.

We are interested in finding a block-relative error bound.
Since $(s,r)\in\lfaJK$ is assumed to be admissible, we have
\begin{gather*}
  B_{tsr} = X|_{\hat t\times\hat s} Y|_{\hat s\times\hat r}
  = X|_{\hat t\times\hat s} V_{Y,s} S_{Y,sr} W_{Y,r}^*,\\
  \|B_{tsr}\|_2 \leq \|X|_{\hat t\times\hat s} V_{Y,s}\|_2
                     \|S_{Y,sr} W_{Y,r}^*\|_2.
\end{gather*}
Both quantities on the right-hand side are easily available to
us: the matrices $A_{t,s} = U_t^* X|_{\hat t\times\hat s} V_{Y,s}$
appear naturally in our algorithm and we can use
\begin{equation*}
  \|X|_{\hat t\times\hat s} V_{Y,s}\|_2
  \geq \|U_t U_t^* X|_{\hat t\times\hat s} V_{Y,s}\|_2
  = \|U_t^* X|_{\hat t\times\hat s} V_{Y,s}\|_2
\end{equation*}
as a lower bound for the first term since $U_t$ is isometric.
For the second term we have
\begin{equation*}
  \|S_{Y,sr} W_{Y,r}^*\|_2
  = \|S_{Y,sr} R_{Y,r}^* Q_{Y,r}^*\|_2
  = \|S_{Y,sr} R_{Y,r}^*\|_2
\end{equation*}
by using the basis weights $R_{Y,r}$ introduced in
Definition~\ref{de:basis_weights}, so we can compute the norms of
these small matrices exactly.

Following the strategy presented in \cite[Chapter~6.8]{BO10}, we
would simply scale the blocks of $V_t$ and $\widehat{V}_t$ by the
reciprocals of the norm to ensure block-relative error estimates.

Since the two factors of the norm appear at different stages of the
algorithm and blocks are mixed irretrievably during the setup of the total
weight matrices, we have to follow a slightly different approach:
during the construction of the total weight matrices, we scale
the blocks of $A_s$ by the reciprocal of $\|S_{Y,sr} R_{Y,r}^*\|_2$,
and during the compression algorithm, we scale the blocks of
$V_t$ and $\widehat{V}_t$ by the reciprocals of
$\|X|_{\hat t\times\hat s} V_{Y,s}\|_2$ and
$\|\widehat{A}_{ts}\|_2 = \|U_t^* X|_{\hat t\times\hat s} V_{Y,s}\|_2
\leq \|X|_{\hat t\times\hat s} V_{Y,s}\|_2$, respectively.

Following the concept of \cite[Chapter~6.8]{BO10}, this leads to a
practical error-control strategy, but only yields bounds of the type
\begin{equation*}
  \|X|_{\hat t\times\hat s} Y|_{\hat s\times\hat r}
    - Q_t Q_t^* X|_{\hat t\times\hat s} Y|_{\hat s\times\hat r}\|_2
  \leq \epsilon \|X|_{\hat t\times\hat s}\|_2 \|Y|_{\hat s\times\hat r}\|_2
\end{equation*}
for a given tolerance $\epsilon\in\bbbr_{>0}$ and not the preferable
block-relative bound
\begin{equation*}
  \|X|_{\hat t\times\hat s} Y|_{\hat s\times\hat r}
    - Q_t Q_t^* X|_{\hat t\times\hat s} Y|_{\hat s\times\hat r}\|_2
  \leq \epsilon \|X|_{\hat t\times\hat s} Y|_{\hat s\times\hat r}\|_2.
\end{equation*}

\section{Coarsening}
\label{se:coarsening}

\begin{figure}
  \pgfdeclareimage[width=0.4\textwidth]{simpleblock}{fi_simpleblock}
  \pgfdeclareimage[width=0.4\textwidth]{productblock}{fi_productblock}

  \begin{centering}
    \pgfuseimage{simpleblock}%
    \hfill%
    \pgfuseimage{productblock}
  \end{centering}

  \caption{Original block structure (left) and
     product block structure (right)}
  \label{fi:blocks}
\end{figure}

The representations (\ref{eq:semiuniform}) and (\ref{eq:semiuniform2})
crucial to our algorithm hold only if either $(s,r)$ or $(t,s)$ are
admissible, i.e., the block partition used for the product $X Y$ has
to be sufficiently refined.
Assuming that $\ctIJ$ and $\ctJK$ are the block trees for the factors
$X$ and $Y$, the correct block tree $\ctprod$ for the product is given
inductively as the minimal block tree with
$\treeroot(\ctprod)=(\treeroot(\ctI),\treeroot(\ctK))$ and
\begin{align*}
  \chil(t,r) &= \begin{cases}
    \chil(t)\times\chil(r)
    &\text{ if } s\in\ctJ \text{ exists with }
        (t,s)\in\ctIJ\setminus\lfIJ,\\
    &\qquad\qquad\qquad \text{ and } (s,r)\in\ctJK\setminus\lfJK,\\
    \emptyset &\text{ otherwise},
  \end{cases}
\end{align*}
for all $(t,r)\in\ctprod$, i.e., we have to keep subdividing blocks as
long as there is at least one $s\in\ctJ$ such that both $(t,s)$ and $(s,r)$
are not leaves of $\ctIJ$ and $\ctJK$, respectively.
We denote the leaves of $\ctprod$ by $\lfprod$.
A leaf is considered admissible if for all $s\in\ctJ$
with $(t,s)\in\ctIJ$ and $(s,r)\in\ctJK$ at least one of $(t,s)$
and $(s,r)$ is admissible.
The admissible leaves are denoted by $\lfaprod$, the remaining
inadmissible leaves by $\lfiprod$.

The block tree $\ctprod$ induced by the multiplication is frequently
much finer than the prescribed block tree $\ctIK$ we
would like to use to approximate the product.
Figure~\ref{fi:blocks} shows a simple one-dimensional example:
on the left we have the block structure of $\ctIJ$ and $\ctJK$,
on the right the induced block structure $\ctprod$.

Fortunately, it is possible to prove \cite[Section~2.2]{GRHA02}
in standard situations that every admissible block in $\ctIK$ is split
into only a bounded number $C_{XY}$ of sub-blocks in $\ctprod$, and this
fact still allows us to construct efficient algorithms for the
matrix multiplication.
This effect is visible in Figure~\ref{fi:blocks}: every block on
the left-hand side is split into at most $16$ sub-blocks on the
right-hand side.

In this section, we assume that an $\mathcal{H}^2$-matrix
approximation of the product $G=XY$ has been computed by the
algorithm presented in the previous section, i.e., $G$ is
an $\mathcal{H}^2$-matrix with the block tree $\ctprod$, the
compressed induced row basis $V = (V_t)_{t\in\ctI}$ and
the compressed induced column basis $W = (W_r)_{r\in\ctK}$.
Our algorithm ensures that both bases are isometric, so no
basis weights have to be taken into account.

Our task is to construct a more efficient $\mathcal{H}^2$-matrix
with a coarser block tree $\ctIK$, i.e., we have to find
adaptive row and column cluster bases for a good approximation
of the matrix $G$.
As before, we focus only on the construction of the row basis,
since the same procedure can be applied to the adjoint matrix
$G^*$ to find a column basis.

\paragraph{Outline of the basis construction.}
Our construction is motivated by the fundamental algorithm
presented in \cite[Chapter~6.4]{BO10} with adjustments similar to
\cite[Chapter~6.6]{BO10} to take advantage of the fact that the
matrix $G$ is already presented as an $\mathcal{H}^2$-matrix,
although with a finer block tree.
The new cluster basis will again be denoted by $Q=(Q_t)_{t\in\ctI}$,
and we will again aim for an isometric basis.

For a given cluster $t\in\ctI$, we have to ensure that all admissible
blocks $(t,r)\in\lfaIK$ are approximated well, i.e.,
\begin{align*}
  G|_{\hat t\times\hat r} &\approx Q_t Q_t^* G|_{\hat t\times\hat r} &
  &\text{ for all } (t,r)\in\lfaIK.
\end{align*}
We collect the column clusters of these matrices in the sets
\begin{align*}
  \brow(t) &:= \{ r\in\ctK\ :\ (t,r)\in\lfaIK \} &
  &\text{ for all } t\in\ctI.
\end{align*}
Since we are looking for a \emph{nested} basis, (\ref{eq:transfer})
requires us to take the ancestors of $t$ into account, i.e., if
$t^*\in\ctI$ is an ancestor of $t$ and $(t^*,r)\in\lfaIK$, we
have to approximate $G|_{\hat t\times\hat r}$, too.
We express this fact by collecting all column clusters ultimately
contributing to $t$ in the sets
\begin{align*}
  \brow^*(t) &:= \begin{cases}
    \brow(t) &\text{ if } t=\treeroot(\ctI),\\
    \brow(t) \cup \brow^*(t^+) &\text{ if } t\in\chil(t^+),\ t^+\in\ctI
  \end{cases}
\end{align*}
for all $t\in\ctI$.
Our goal is to find an isometric nested cluster basis $Q=(Q_t)_{t\in\ctI}$
with
\begin{align*}
  G|_{\hat t\times\hat r}
  &\approx Q_t Q_t^* G|_{\hat t\times\hat r} &
  &\text{ for all } t\in\ctI,\ r\in\brow^*(t).
\end{align*}
If we combine all submatrices into large matrices
\begin{align*}
  G_t &:= G|_{\hat t\times\mathcal{R}_t}, &
  \mathcal{R}_t &:= \bigcup \{ \hat r\ :\ r\in\brow^*(t) \} &
  &\text{ for all } t\in\ctI,
\end{align*}
this property is equivalent with
\begin{align*}
  G_t &\approx Q_t Q_t^* G_t &
  &\text{ for all } t\in\ctI.
\end{align*}
The construction of $Q_t$ proceeds again by recursion:
if $t\in\lfI$ is a leaf, we compute the singular values and left
singular vectors of $G_t$ and combine the first $k$ left singular vectors
as the columns of $Q_t$ to obtain our adaptive isometric basis with
$G_t \approx Q_t Q_t^* G_t$.

If $t\in\ctI\setminus\lfI$ is not a leaf, we again assume --- for the
sake of simplicity --- that there are exactly two children
$\{t_1',t_2'\}=\chil(t)$.
These children are treated first so that we have $Q_{t_1'}$ and
$Q_{t_2'}$ at our disposal.
Using the isometric matrix
\begin{equation}\label{eq:Ut_definition}
  U_t := \begin{pmatrix}
           Q_{t_1'} & \\
           & Q_{t_2'}
         \end{pmatrix},
\end{equation}
we can approximate $G_t \approx U_t U_t^* G_t$ by the orthogonal projection
into the children's spaces and only have to treat the reduced matrix
\begin{equation}\label{eq:Ghat_definition}
  \widehat{G}_t := U_t^* G_t
\end{equation}
containing the corresponding coefficients.
Now we can again compute the singular values and use the first $k$ left
singular vectors of $\widehat{G}_t$ to form an isometric matrix $\widehat{Q}_t$.
Since $U_t$ is also isometric, the same holds for $Q_t := U_t \widehat{Q}_t$,
and splitting
\begin{equation*}
  \widehat{Q}_t = \begin{pmatrix} E_{t_1'}\\ E_{t_2'} \end{pmatrix}
\end{equation*}
yields the transfer matrices for the new adaptive basis with
$\widehat{G}_t \approx \widehat{Q}_t \widehat{Q}_t^* \widehat{G}_t$
and ultimately $G_t \approx Q_t Q_t^* G_t$.
An in-depth error analysis can be found in \cite[Chapter~6]{BO10}.

\paragraph{Condensation.}
The key to designing an efficient algorithm is the \emph{condensation}
of the matrix $G_t$:
we are looking for a matrix $C_t\in\bbbr^{\hat t\times\ell}$ with a small
number $\ell\in\bbbn$ of columns and a thin isometric matrix
$P_t\in\bbbr^{\mathcal{R}_t\times\ell}$ such that $G_t = C_t P_t^*$.
Since $P_t$ is isometric, i.e., $P_t^* P_t = I$, we have
\begin{equation*}
  \|(I - Q_t Q_t^*) G_t\|_2
  = \|(I - Q_t Q_t^*) C_t P_t^*\|_2
  = \|(I - Q_t Q_t^*) C_t\|_2
\end{equation*}
for any matrix $Q_t$, therefore we can replace $G_t$ throughout our
algorithm with the condensed matrix $C_t$ without changing the result.
If $\ell$ is smaller than $\#\mathcal{R}_t$, working with $C_t$ instead
of $G_t$ is more efficient.

The matrix $C_t$ can be constructed by considering the different blocks
of $G_t$ individually.
The most important example occurs for $r\in\ctK$ with $(t,r)\in\lfaprod$:
by definition, we have
\begin{equation*}
  G_t|_{\hat t\times\hat r}
  = G|_{\hat t\times\hat r}
  = V_t S_{tr} W_r^*,
\end{equation*}
and since the column basis $W$ is isometric, we can simply replace
$G|_{\hat t\times\hat r}$ by the matrix $V_t S_{tr}$ with only $k$ columns.

If we have multiple $r_1,\ldots,r_\ell$ with $(t,r_i)\in\lfaprod$ for
all $i\in\{1,\ldots,\ell\}$, we find
\begin{equation*}
  G_t|_{\hat t\times(\hat r_1\cup\ldots\cup\hat r_\ell)}
  = \begin{pmatrix}
      G|_{\hat t\times\hat r_1} & \ldots & G|_{\hat t\times\hat r_\ell}
    \end{pmatrix}
  = V_t \begin{pmatrix}
      S_{tr_1} & \ldots & S_{tr_\ell}
    \end{pmatrix}
    \begin{pmatrix}
      W_{r_1}^* & & \\
      & \ddots & \\
      & & W_{r_\ell}^*
    \end{pmatrix},
\end{equation*}
and we can not only drop the isometric right-most block
matrix, we can also use a thin Householder factorization
\begin{align}\label{eq:Zt_condensation}
  P_t Z_t &= \begin{pmatrix}
                S_{tr_1}^*\\ \vdots\\ S_{tr_\ell}^*
             \end{pmatrix}, &
  Z_t^* P_t^*= \begin{pmatrix}
                S_{tr_1} & \ldots & S_{tr_\ell}
             \end{pmatrix},
\end{align}
with an isometric matrix $P_t$ and $Z_t\in\bbbr^{k\times k}$ to obtain
\begin{equation*}
  G_t|_{\hat t\times(\hat r_1\cup\ldots\cup\hat r_\ell)}
  = V_t Z_t^* P_t^*
    \begin{pmatrix}
      W_{r_1}^* & & \\
      & \ddots & \\
      & & W_{r_\ell}^*
    \end{pmatrix},
\end{equation*}
which allows us to replace the entire block by the small matrix $V_t Z_t^*$
with only $k$ columns.
This approach can be extended to all ancestors of a cluster $t$, since
(\ref{eq:transfer}) allows us to translate a weight matrix $Z_{t^+}$ for
the parent of $t$ into the weight matrix $Z_{t^+} E_{t}^*$ for $t$ as
in Definition~\ref{de:total_weights}.
Using this approach, \emph{all} admissible blocks in $\ctprod$ can be
reduced to small weight matrices $Z_t$ for $t\in\ctI$ and handled
very efficiently.

Unfortunately, $\ctprod$ is finer than the block tree $\ctIK$ we want
to use for the final result, so we have to be able to deal with
blocks $(t,r)\in\lfaIK$ that are not admissible in $\ctprod$.

If $(t,r)\in\lfiprod$ holds, our $\mathcal{H}^2$-matrix representation
of the product $G$ contains the matrix $G|_{\hat t\times\hat r}$
explicitly and we can simply copy it into $C_t$ during our algorithm.

The situation becomes more challenging if $(t,r)\in\ctprod\setminus\lfprod$
holds, i.e., if $(t,r)$ corresponds to a block that is subdivided in
$\ctprod$, but an admissible leaf in $\ctIK$.

Let us consider an example:
we assume that $\chil(t)=\{t_1',t_2'\}$ and $\chil(r)=\{r_1',r_2'\}$ and
that $(t_i',r_j')\in\lfaprod$ are admissible blocks for all $i,j\in\{1,2\}$.
Then the corresponding submatrix has the form
\begin{align*}
  G|_{\hat t\times\hat r}
  &= \begin{pmatrix}
       G|_{\hat t_1\times\hat r_1} &
       G|_{\hat t_1\times\hat r_2}\\
       G|_{\hat t_2\times\hat r_1} &
       G|_{\hat t_2\times\hat r_2}
     \end{pmatrix}
   = \begin{pmatrix}
       V_{t_1} S_{t_1 r_1} W_{r_1}^* &
       V_{t_1} S_{t_1 r_2} W_{r_2}^*\\
       V_{t_2} S_{t_2 r_1} W_{r_1}^* &
       V_{t_2} S_{t_2 r_2} W_{r_2}^*
     \end{pmatrix}\\
  &= \begin{pmatrix}
       V_{t_1} S_{t_1 r_1} & V_{t_1} S_{t_1 r_2}\\
       V_{t_2} S_{t_2 r_1} & V_{t_2} S_{t_2 r_2}
     \end{pmatrix}
     \begin{pmatrix}
       W_{r_1}^* & \\
       & W_{r_2}^*
     \end{pmatrix},
\end{align*}
and we can again drop the right-most term because
it is isometric, thus replacing $G|_{\hat t\times\hat r}$ by the matrix
\begin{equation*}
  \begin{pmatrix}
    V_{t_1} S_{t_1 r_1} & V_{t_1} S_{t_1 r_2}\\
    V_{t_2} S_{t_2 r_1} & V_{t_2} S_{t_2 r_2}
  \end{pmatrix}
\end{equation*}
with only $2k$ columns.

In order to extend this approach to arbitrary block structures
within $G|_{\hat t\times\hat r}$, we introduce the \emph{column tree}.

%
%
\begin{figure}
  \pgfdeclareimage[width=12cm]{column}{fi_column}

  \begin{center}
    \pgfuseimage{column}
  \end{center}
  \caption{Block structures and corresponding column trees}
  \label{fi:column}
\end{figure}

%
%
\begin{definition}[Column tree]
The \emph{column tree} for $(t,r)\in\ctprod$ is the minimal cluster tree
$\mathcal{T}_{(t,r)}$
\begin{itemize}
  \item with root $r$ and
  \item for every $r'\in\mathcal{T}_{(t,r)}$ there is a $t'\in\ctI$
    such that $(t',r')$ is a descendant of $(t,r)$ in the block tree $\ctprod$,
  \item every $r'\in\mathcal{T}_{(t,r)}$ is either a leaf or has
    the same children as it does in $\ctK$.
\end{itemize}
The leaves of the column tree $\mathcal{T}_{(t,r)}$ are denoted by
$\mathcal{L}_{(t,r)}$.
A leaf $r'\in\mathcal{L}_{(t,r)}$ is called \emph{inadmissible} if
a $t'\in\ctI$ with $(t',r')\in\lfiprod$ exists and \emph{admissible}
otherwise.
The corresponding index sets $\{ \hat r'\ :\ r'\in\mathcal{L}_{(t,r)} \}$
are a disjoint partition of the index set $\hat r$.

By construction $\mathcal{T}_{(t,r)}$ is a subtree of the cluster tree $\ctK$.
\end{definition}

The column tree $\mathcal{T}_{(t,r)}$ can be interpreted as an
``orthogonal projection'' of the subtree of $\ctprod$ rooted at $(t,r)$
into its second component.
Examples of block structures and the corresponding column trees
can be seen in Figure~\ref{fi:column}.

%
%
\begin{lemma}[Column representation]
\label{le:column_representation}
Let $(t,r)\in\ctprod$, and let $(t',r')\in\lfaprod$ be a descendant
of $(t,r)$.
Let $r^*\in\mathcal{L}_{(t,r)}$ be a descendant of $r'$.
There is a matrix $A\in\bbbr^{\hat t'\times k}$ with
$G|_{\hat t\times\hat r^*} = A W_{r^*}^*$.
\end{lemma}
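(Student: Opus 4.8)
The plan is to exploit the fact that $r^*$ lies inside $\hat r'$ together with the admissibility of $(t',r')$, which gives a factorized representation through the cluster basis of the product. First I would observe that $r^* \in \mathcal{L}_{(t,r)}$ is a descendant of $r'$, so $\hat r^* \subseteq \hat r'$. Since $(t',r') \in \lfaprod$, the $\mathcal{H}^2$-matrix representation of $G$ yields $G|_{\hat t' \times \hat r'} = V_{t'} S_{t'r'} W_{r'}^*$. Restricting the columns to $\hat r^*$ and using the nested structure (\ref{eq:transfer}) of the column basis $W$, we have $W_{r'}|_{\hat r^* \times k} = W_{r^*} (\text{product of transfer matrices along the path from } r' \text{ to } r^*)$; call this product $\Phi \in \bbbr^{k \times k}$. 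Hence $G|_{\hat t' \times \hat r^*} = V_{t'} S_{t'r'} \Phi^* W_{r^*}^*$.

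The second step is to pass from the submatrix $G|_{\hat t' \times \hat r^*}$ to $G|_{\hat t \times \hat r^*}$. Here I would use that $\hat t' \subseteq \hat t$ (since $(t',r')$ is a descendant of $(t,r)$ in $\ctprod$, so $t'$ is a descendant of $t$ in $\ctI$, possibly with $t' = t$), and that the rows of $G|_{\hat t \times \hat r^*}$ outside $\hat t'$ are not directly controlled by this one block. So a single admissible block is not enough to represent all of $G|_{\hat t \times \hat r^*}$; one needs to cover every row index of $\hat t$. The clean way is to note that the set of admissible leaves $(t'', r'')$ of $\ctprod$ that are descendants of $(t,r)$ and have $r^*$ as a descendant of $r''$, ranging over the relevant $t''$, partitions $\hat t$ in their row components — more precisely, the column tree construction guarantees that $r^* \in \mathcal{L}_{(t,r)}$ means every block $(t'', r^*)$ or $(t'', r'')$ with $r^*$ below $r''$ covering $\hat t$ is admissible (otherwise $r^*$ would be an inadmissible leaf or $r^*$'s ancestor would stop earlier). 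Stacking the resulting factorizations $G|_{\hat t'' \times \hat r^*} = A_{t''} W_{r^*}^*$ with compatible right factor $W_{r^*}^*$ and assembling the $A_{t''}$ into a single $A \in \bbbr^{\hat t \times k}$ gives $G|_{\hat t \times \hat r^*} = A W_{r^*}^*$.

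The main obstacle, and the step requiring the most care, is justifying that for \emph{every} row index in $\hat t$ there is an admissible ancestor block in $\ctprod$ sitting above $(t'', r^*)$ for some $t''$ — i.e., that no part of the column $\hat r^*$ of $G$, restricted to $\hat t$, falls into an inadmissible leaf of $\ctprod$ or remains unsubdivided in a way incompatible with the claimed factorization. This is exactly where the hypothesis $r^* \in \mathcal{L}_{(t,r)}$ (rather than an arbitrary descendant of $r'$) is used: by the third bullet of the column-tree definition, $r^*$ being a leaf of $\mathcal{T}_{(t,r)}$ means the subtree of $\ctprod$ below $(t,r)$ does not subdivide the column component any further within $\hat r^*$, so along each row-path the block becomes a leaf of $\ctprod$ at some cluster $t''$ with $\hat r^* \subseteq \hat r''$, and $(t'',r'')$ inadmissible would make $r^*$ an inadmissible leaf — but that case is still covered by simply copying the explicitly stored block $G|_{\hat t'' \times \hat r^*}$ into the corresponding rows of $A$ times $W_{r^*}^*$ after noting $W_{r^*}^*$ has full row rank $k \le \#\hat r^*$, or more simply by observing the lemma as stated only asserts existence of \emph{some} $A$, so even an inadmissible leaf contributes a block that trivially factors through $W_{r^*}^*$ whenever $W_{r^*}$ is invertible from the right; the genuinely substantive content is the admissible case handled via the transfer matrices. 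I would therefore organize the proof as: (i) reduce to a single descendant block via $\hat r^* \subseteq \hat r'$; (ii) apply (\ref{eq:vsw}) and (\ref{eq:transfer}) to get the factorization for that block; (iii) remark that stacking over the row partition of $\hat t$ induced by $\mathcal{L}_{(t,r)}$ (in its second component fixed at $r^*$) assembles the global $A$, with the column tree's leaf property ensuring the partition is exactly by admissible-or-explicit leaves, all sharing the right factor $W_{r^*}^*$.
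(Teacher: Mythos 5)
Your first paragraph is precisely the paper's proof: start from $(t',r')\in\lfaprod$ so that $G|_{\hat t'\times\hat r'} = V_{t'} S_{t'r'} W_{r'}^*$, restrict columns to a descendant $r^*$ and use the nesting relation \eqref{eq:transfer} inductively to absorb the product $\Phi$ of transfer matrices into $A := V_{t'} S_{t'r'} \Phi^*$, yielding $G|_{\hat t'\times\hat r^*} = A W_{r^*}^*$. That is the whole argument; the statement of the lemma contains a typo, as you can already see from the dimensions: $A\in\bbbr^{\hat t'\times k}$ forces the left-hand side to be $G|_{\hat t'\times\hat r^*}$, not $G|_{\hat t\times\hat r^*}$. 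You should have flagged this and stopped after the first paragraph.

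Taking the misprinted $\hat t$ literally, you then set out to prove a strictly stronger statement, and this is where a genuine error enters. You claim that an explicitly stored inadmissible block $G|_{\hat t''\times\hat r^*}$ ``trivially factors through $W_{r^*}^*$ whenever $W_{r^*}$ is invertible from the right.'' This is false: $W_{r^*}^*\in\bbbr^{k\times\hat r^*}$ typically has $k<\#\hat r^*$, so $A W_{r^*}^*$ can only produce matrices whose rows all lie in the $k$-dimensional row space of $W_{r^*}^*$, i.e.\ the column space of $W_{r^*}$. An arbitrary nearfield block need not satisfy this, and a right inverse of $W_{r^*}^*$ does not repair it (right-invertibility of a wide matrix lets you solve $W_{r^*}^* B = C$ for $B$, not $A W_{r^*}^* = M$ for $A$). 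So the stronger claim is actually wrong for inadmissible leaves of the column tree, which is exactly why the paper treats such blocks separately by copying $G|_{\hat t\times\hat r}$ into $C_t$ rather than factoring it. Your ``stacking over admissible descendants'' idea is fine for the admissible part of the column, but it cannot cover the inadmissible rows, and the lemma as intended never asks it to.
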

\begin{proof}
This is a simple consequence of (\ref{eq:transfer}):
due to $(t',r')\in\lfaprod$, we have
\begin{equation*}
  G|_{\hat t'\times\hat r'} = V_{t'} S_{t'r'} W_{r'}^*,
\end{equation*}
and restricting to a child $r''\in\chil(r')$ leads to
\begin{equation*}
  G|_{\hat t'\times\hat r''} = V_{t'} S_{t'r'} F_{r''}^* W_{r''}^*
\end{equation*}
due to (\ref{eq:transfer}).
We can proceed by induction until we reach the descendant $r^*$.
\end{proof}

This lemma suggests how to perform condensation for subdivided
matrices:
for $(t,r)\in\ctprod$, we enumerate the leaves
$\mathcal{L}_{(t,r)}=\{r_1,\ldots,r_\ell\}$ of the column tree
$\mathcal{T}_{(t,r)}$ and see that Lemma~\ref{le:column_representation}
gives us matrices $A_{r_1},\ldots,A_{r_\ell}\in\bbbr^{\hat t\times k}$ such that
\begin{equation*}
  G|_{\hat t\times\hat s}
  = \begin{pmatrix}
    A_{r_1} W_{r_1}^* & \ldots & A_{r_\ell} W_{r_\ell}^*
  \end{pmatrix}
  = \begin{pmatrix}
    A_{r_1} & \ldots A_{r_\ell}
  \end{pmatrix}
  \begin{pmatrix}
    W_{r_1}^* & & \\
    & \ddots & \\
    & & W_{r_\ell}^*
  \end{pmatrix},
\end{equation*}
where we can again drop the isometric right-most factor to get
a matrix with only $\ell k$ columns.

In order to construct these condensed representations efficiently,
we would like to use a recursive approach:
first we construct representations for children of a block, then
we merge them to get a representation for the parent.
Figure~\ref{fi:column} indicates a problem: different child blocks
within the same column may have different column trees, and a
common ``super tree'' has to be constructed to cover all of them.

Fortunately, we have again the nested structure (\ref{eq:transfer})
of the cluster basis $W$ at our disposal:
if we have $r$ as a leaf in one column tree, while another column tree
needs the children $\{r_1',r_2'\}=\chil(r)$ of $r$, we simply use
the transfer matrices $F_{r_1'}$ and $F_{r_2'}$ as in
\begin{equation*}
  A_r W_r^*
  = A_r \begin{pmatrix}
          W_{r_1'} F_{r_1'}\\
          W_{r_2'} F_{r_2'}
        \end{pmatrix}^*
  = \begin{pmatrix}
      A_r F_{r_1'}^* & A_r F_{r_2'}^*
    \end{pmatrix}
    \begin{pmatrix}
      W_{r_1'}^* & \\
      & W_{r_2'}^*
    \end{pmatrix}
\end{equation*}
to add representations for both children.
We can apply this procedure to ensure that the representation for
a column tree $\mathcal{T}_{(t',r')}$ of a sub-block $(t',r')$
matches a column tree $\mathcal{T}_{(t,r)}$ of its parent.
The corresponding algorithm is given in Figure~\ref{fi:match_column}.

%
%
\begin{figure}
  \begin{quotation}
    \begin{tabbing}
      \textbf{procedure} match\_column($r$, $\mathcal{T}$, $A$);\\
      \textbf{begin}\\
      \quad\= \textbf{if} $\chil(r)\neq\emptyset$ \textbf{then begin}\\
      \> \quad\= \textbf{for} $r'\in\chil(r)$ \textbf{do}\\
      \> \> \quad\=  match\_column($r'$, $\mathcal{T}$, $A$);\\
      \> \textbf{end else if} $\chil_{\mathcal{T}}(r)\neq\emptyset$
               \textbf{then begin}\\
      \> \> $\chil(r) \gets \chil_{\mathcal{T}}(r)$;\\
      \> \> \textbf{for} $r'\in\chil(r)$ \textbf{do begin}\\
      \> \> \> $A_{r'} \gets A_r F_{r'}^*$;\\
      \> \> \> match\_column($r'$, $\mathcal{T}$, $A$)\\
      \> \> \textbf{end}\\
      \> \textbf{end else if} $\adm(r)$ and not $\adm_\mathcal{T}(r)$
      \textbf{do begin}\\
      \> \> $\adm(r) \gets \adm_\mathcal{T}(r)$;\\
      \> \> $A_r \gets A_r W_{r}^*$;\\
      \> \textbf{end}\\
      \textbf{end}
    \end{tabbing}
  \end{quotation}
  \caption{Extending a column tree with root $r$ to match a given
    column tree $\mathcal{T}$. $\chil(r)$ and $\adm(r)$ denote the
    children of $r$ and its admissibility in its own column tree,
    $\chil_{\mathcal{T}}(r)$ and $\adm_{\mathcal{T}}(r)$ denote the children
    and the admissibility in the target tree $\mathcal{T}$.}
  \label{fi:match_column}
\end{figure}

Now we can return to the construction of the new cluster basis $Q$.
As mentioned before, we proceed by recursion.
On the way from the root towards the leaves, we accumulate total
weight matrices $Z_t$ by computing thin Householder factorizations
\begin{equation*}
  P_t Z_t = \begin{pmatrix}
    Z_{t^+} E_t^*\\
    S_{t,r_1}^*\\
    \vdots\\
    S_{t,r_m}
  \end{pmatrix},
\end{equation*}
where $t^+$ denotes the parent of $t$ if it exists (this submatrix
vanishes in the root cluster) and $\{r_1,\ldots,r_m\}=\brow_{XY}(t)$ are
the column clusters of all admissible blocks with respect to the
block tree $\ctprod$, i.e.,
\begin{align*}
  \brow_{XY}(t) &:= \{ r\in\ctK\ :\ (t,r)\in\lfaprod \} &
  &\text{ for all } t\in\ctI.
\end{align*}
Using these weights, the entire part of $G_t$ or $\widehat{G}_t$ that
is admissible in $\ctprod$ can be condensed to $V_t Z_t^*$.
This leaves us with the part that is inadmissible in $\ctprod$, but
admissible in $\ctIK$.

\paragraph*{Leaf clusters.}
Let $t\in\lfI$ be a leaf of $\ctI$.
Since $t$ is a leaf, the only inadmissible blocks $(t,r)\in\lfiprod$
must correspond to nearfield matrices that are represented
explicitly by our construction, i.e., $G|_{\hat t\times\hat r}$ is
readily available to us.

Let $\{r_1,\ldots,r_m\}$ denote all of these clusters, i.e., we have
$(t,r_i)\in\lfiprod$ for all $i\in\{1,\ldots,m\}$ and
an ancestor of each block is admissible in $\ctIK$.
We use the condensed version $V_t Z_t^*$ of the admissible part in
combination with the inadmissible parts to get
\begin{equation}\label{eq:ct_leaf}
  C_t := \begin{pmatrix}
    V_t Z_t^* & G|_{\hat t\times\hat r_1} & \ldots
    & G|_{\hat t\times\hat r_m}
  \end{pmatrix}.
\end{equation}
As mentioned before, the number of inadmissible blocks in $\ctprod$
that are descendants of admissible blocks in $\ctIK$ is bounded,
therefore $C_t$ has $\#\hat t$ rows and $\mathcal{O}(k)$ columns.

This makes it sufficiently small to compute the singular value decomposition
of $C_t$ in $\mathcal{O}(k^2 \#\hat t)$ operations.
We can use the singular values to choose an appropriate rank controlling
the approximation error, and we can use the leading left singular vectors
to set up the isometric matrix $Q_t$.

In preparation of the next steps, we also compute the basis-change
matrix $R_t := Q_t^* V_t$ and the auxiliary matrices
\begin{align*}
  A_{t,r_i} &= Q_t^* G|_{\hat t\times r_i} &
  &\text{ for all } i\in\{1,\ldots,m\}.
\end{align*}

\paragraph*{Non-leaf clusters.}
Let now $t\in\ctI\setminus\lfI$ be a non-leaf cluster of $\ctI$.
For the sake of simplicity, we restrict out attention to the case
$\chil(t)=\{t_1',t_2'\}$.
Since we are using recursion, $Q_{t_1'}$ and $Q_{t_2'}$ are already
at our disposal, as are the basis changes $R_{t_1'}$ and $R_{t_2'}$ and
the weights $A_{t_1',r}$ and $A_{t_2',r}$ for the inadmissible blocks
connected to the children.

Our goal is to set up a condensed counterpart of the matrix
\begin{equation*}
  \widehat{G}_t := \begin{pmatrix}
    Q_{t_1'}^* G|_{\hat t_1'\times\mathcal{R}_t}\\
    Q_{t_2'}^* G|_{\hat t_2'\times\mathcal{R}_t}
  \end{pmatrix}.
\end{equation*}
Again, all the admissible parts are already represented by the
weight matrix $Z_t$, and we can replace them all by
\begin{equation*}
  \widehat{V}_t Z_t^* \qquad\text{with}\qquad
  \widehat{V}_t := \begin{pmatrix}
    Q_{t_1'}^* V_{t_1'} E_{t_1'}\\
    Q_{t_2'}^* V_{t_2'} E_{t_2'}
  \end{pmatrix}
  = \begin{pmatrix}
    R_{t_1'} E_{t_1'}\\
    R_{t_2'} E_{t_2'}
  \end{pmatrix}.
\end{equation*}
The inadmissible part is a little more challenging.
We consider a cluster $r\in\ctK$ with $(t,r)\in\ctprod\setminus\lfprod$,
i.e., $(t,r)$ is subdivided.
For the sake of simplicity, we assume $\chil(r)=\{r_1',r_2'\}$ and have
\begin{equation*}
  G|_{\hat t\times\hat r}
  = \begin{pmatrix}
    G|_{\hat t_1'\times\hat r_1'} & G|_{\hat t_1'\times\hat r_2'}\\
    G|_{\hat t_2'\times\hat r_1'} & G|_{\hat t_2'\times\hat r_2'}
  \end{pmatrix}.
\end{equation*}
We require only the projection into the ranges of $Q_{t_1'}$ and
$Q_{t_2'}$, i.e.,
\begin{equation*}
  \begin{pmatrix}
    Q_{t_1'}^* G|_{\hat t_1'\times\hat r_1'} &
    Q_{t_1'}^* G|_{\hat t_1'\times\hat r_2'}\\
    Q_{t_2'}^* G|_{\hat t_2'\times\hat r_1'} &
    Q_{t_2'}^* G|_{\hat t_2'\times\hat r_2'}
  \end{pmatrix}.
\end{equation*}
If one of these submatrices $Q_{t'}^* G|_{\hat t'\times\hat r'}$ is again
inadmissible, we have already seen it when treating the corresponding
child $t'$, and therefore $A_{t',r'}$ is already at our disposal.

%
%
\begin{figure}
  \pgfdeclareimage[width=0.8\textwidth]{inadm}{fi_inadm}

  \begin{center}
    \pgfuseimage{inadm}
  \end{center}

  \caption{Coarsening of a subdivided block: on the finest level
    (left) the submatrices are compressed individually.
    Stepping to a coarser level (right), the submatrices are merged
    by matching the column trees.
    Light green blocks are included in the parent's weight $Z_t$.}
  \label{fi:iadm}
\end{figure}

If one of these submatrices $Q_{t'}^* G|_{\hat t'\times\hat r'}$ is, on the
other hand, admissible, we can use (\ref{eq:vsw}) and the basis-change
matrix $R_{t'}$ prepared previously to obtain
\begin{equation*}
  A_{t',r'} := Q_{t'}^* G|_{\hat t'\times\hat r'}
  = Q_{t'}^* V_{t'} S_{t'r'} W_{r'}^*
  = R_{t'} S_{t'r'} W_{r'}^*.
\end{equation*}
Since the right-most factor $W_{r'}$ is again
isometric, we can discard it if necessary without changing the
singular values and the left singular vectors.
Now that we have representations for all submatrices, we can use
the function ``match\_column'' of Figure~\ref{fi:column} to ensure
that they all share the same column tree $\mathcal{T}_{(t,r)}$ and
combine them into the condensed matrix
\begin{align*}
  \widehat{A}_{t,r} &:= \begin{pmatrix}
    A_{t_1',r_1} & \ldots & A_{t_1',r_\ell}\\
    A_{t_2',r_1} & \ldots & A_{t_2',r_\ell}
  \end{pmatrix}, &
  \mathcal{L}_{(t,r)} &= \{ r_1,\ldots,r_\ell \}.
\end{align*}
Combining the admissible and inadmissible submatrices yields
the condensed counterpart of $\widehat{G}_t$ in the form
\begin{equation}\label{eq:ct_nonleaf}
  \widehat{C}_t
  := \begin{pmatrix}
    \widehat{V}_t Z_t^* & \widehat{A}_{t,r_1} & \ldots
    & \widehat{A}_{t,r_m}
  \end{pmatrix},
\end{equation}
where $\{r_1,\ldots,r_m\}$ again denotes all column clusters
of blocks that are subdivided in $\ctprod$ and have an admissible
ancestor in $\ctIK$, i.e., $(t,r_i)\in\ctprod\setminus\lfprod$ for
all $i\in\{1,\ldots,m\}$.
As before, the number of inadmissible blocks in $\ctprod$ as descendants
of admissible blocks in $\ctIK$ is bounded, therefore the matrix
$\widehat{C}_t$ has only $\mathcal{O}(k)$ columns and not more
than $2k$ rows.

This makes it sufficiently small to compute the singular value
decomposition in $\mathcal{O}(k^3)$ operations, and we can again use
the singular values to choose an appropriate rank controlling the
approximation error and the leading left singular vectors to
set up an isometric matrix $\widehat{Q}_t$.

We can split $\widehat{Q}_t$ into transfer matrices for $t_1'$
and $t_2'$ to get
\begin{equation*}
  Q_t := U_t \widehat{Q}_t,
\end{equation*}
i.e., the new cluster basis matrix $Q_t$ is expressed via transfer matrices
as in (\ref{eq:transfer}).

We also compute the basis-change $R_t := Q_t^* V_t
= \widehat{Q}_t^* \widehat{V}_t$ and the auxiliary matrices
\begin{align*}
  A_{t,r_i} &= Q_t^* G|_{\hat t\times\hat r_i}
             = \widehat{Q}_t^* \widehat{A}_{t,r_i} &
  &\text{ for all } i\in\{1,\ldots,m\}
\end{align*}
in preparation for the next stage of the recursion.
We can see that all of these operations take no more than $\mathcal{O}(k^3)$
operations.
The entire algorithm is summarized in Figure~\ref{fi:basis}.

%
%
\begin{figure}
  \begin{quotation}
    \begin{tabbing}
      \textbf{procedure} build\_basis($t$);\\
      \textbf{begin}\\
      \quad\= Compute $Z_t$ as in (\ref{eq:Zt_condensation});\\
      \> \textbf{if} $\chil(t)=\emptyset$ \textbf{then begin}\\
      \> \quad\= Set up $C_t$ as in (\ref{eq:ct_leaf});\\
      \> \> Compute its singular value decomposition
            and choose a rank $k\in\bbbn$;\\
      \> \> Build $Q_t$ from the first $k$ left singular vectors;\\
      \> \> $R_t \gets Q_t^* V_t$;\\
      \> \> \textbf{for} $r\in\ctK$ with $(t,r)\in\lfiprod$
            \textbf{do} $A_{t,r} \gets Q_t^* G|_{\hat t\times\hat r}$\\
      \> \textbf{end}\\
      \> \textbf{else begin}\\
      \> \> \textbf{for} $t'\in\chil(t)$ \textbf{do} build\_basis($t'$);\\
      \> \> Set up $\widehat{C}_t$ as in (\ref{eq:ct_nonleaf});\\
      \> \> Compute its singular value decomposition
            and choose a rank $k\in\bbbn$;\\
      \> \> Build $\widehat{Q}_t$ from the first $k$ left singular vectors
            and set transfer matrices;\\
      \> \> $R_t \gets \widehat{Q}_t^* \widehat{V}_t$;\\
      \> \> \textbf{for} $r\in\ctK$ with $(t,r)\in\ctprod\setminus\lfprod$
            \textbf{do} $A_{t,r} \gets \widehat{Q}_t^* \widehat{A}_{t,r}$\\
      \> \textbf{end}\\
      \textbf{end}
    \end{tabbing}
  \end{quotation}

  \caption{Recursive algorithm for constructing an adaptive row
    cluster basis using the coarse block tree $\ctIK$}
  \label{fi:basis}
\end{figure}

%
%
\begin{remark}[Complexity]
The basis construction algorithm has a complexity of $\mathcal{O}(n k^2)$
if the following conditions are met:
\begin{itemize}
  \item there are constants $c_\text{lf},C_\text{lf}$ with
    $c_\text{lf} k \leq \#\hat t \leq C_\text{lf} k$ for all leaf clusters
    $t\in\ctI$,
  \item the block tree $\ctIK$ is \emph{sparse}, i.e., there is
    a constant $C_\text{sp}$ with
    \begin{align*}
      \#\{ r\ :\ (t,r)\in\ctIK \} &\leq C_\text{sp} &
      &\text{ for all } t\in\ctI,
    \end{align*}
  \item the fine block tree $\ctprod$ is not too fine, i.e., there
    is a constant $C_{XY}$ with
    \begin{align*}
      \#\{ (t',r')\in\ctprod\ :\ 
           \hat t'\times\hat r'\subseteq\hat t\times\hat r \} &\leq C_{XY} &
      &\text{ for all } (t,r)\in\lfaIK.
    \end{align*}
\end{itemize}
The first condition ensures that the matrices $C_t$ and $\widehat{C}_t$
have only $\mathcal{O}(k)$ rows and that the cluster tree $\ctI$ has
only $\mathcal{O}(n/k)$ elements.
This can be ensured by a suitable construction of
the cluster trees.
The second and third conditions ensure that the matrices appearing in the
construction of the weight matrices (\ref{eq:Zt_condensation}) have
only $\mathcal{O}(k)$ columns.
These conditions also ensure that the matrices $C_t$ and $\widehat{C}_t$
have only $\mathcal{O}(k)$ columns.
If one of the standard admissibility conditions is
used, e.g., for a quasi-uniform grid, the second and third condition
are guaranteed.
Together, these three conditions ensure that only
$\mathcal{O}(k^3)$ operations are required for every cluster, and since
there are only $\mathcal{O}(n/k)$ clusters, we obtain $\mathcal{O}(n k^2)$
operations in total.
\end{remark}

%
%
\begin{table}
  \begin{tabular}{r|l}
    Symbol & Meaning\\
    \hline
    $V_{X,t}$, $V_{Y,s}$ & Row cluster bases for $X$ and $Y$,
              transfer matrices $E_{X,t}$, $E_{Y,s}$\\
    $W_{X,s}$, $W_{Y,r}$ & Column cluster basis for $X$ and $Y$,
              transfer matrices $F_{X,s}$, $F_{Y,r}$\\
    $R_{Y,r}$ & Basis weight matrices for $W_{Y,r}$\\
    $Z_s$ & Total weights for the matrix $Y$\\
    $Z_t$ & Total weights for the product $XY$\\
    $S_{X,ts}$, $S_{Y,sr}$ & Coupling matrices for $X$ and $Y$\\
    $V_t$, $W_r$ & Induced row and column cluster basis for the product $XY$\\
    $\mathcal{T}_{\Idx\times\Jdx}$, $\mathcal{T}_{\Jdx\times\Kdx}$,
    $\mathcal{T}_{\Idx\times\Kdx}$ & Block trees for $X$, $Y$, and $Z$\\
    $\mathcal{T}_{\Idx\times\Kdx}^{XY}$ & Induced block tree for $XY$
  \end{tabular}

  \caption{List of symbols}
\end{table}

\section{Numerical experiments}

We investigate the practical performance of the new algorithm
by considering matrices appearing in the context of boundary
element methods:
the single-layer operator
\begin{align*}
  \mathcal{V}[u](x) &= \int_{\Gamma_S} \frac{1}{4\pi\|x-y\|}
                            u(y) \,dy &
  &\text{ for all } x\in\Gamma_S
\end{align*}
on the unit sphere $\Gamma_S := \{ x\in\bbbr^3\ :\ \|x\|_2=1 \}$,
approximated on surface meshes constructed by starting with a double
pyramid, splitting the faces regularly into triangles, and moving
their vertices to the sphere, 
and the double-layer operator
\begin{align*}
  \mathcal{K}[u](x) &= \int_{\Gamma_C} \frac{\langle n(x), x-y \rangle}
           {4\pi \|x-y\|^3} u(y) \,dy &
  &\text{ for all } x\in\Gamma_C
\end{align*}
on the surface of the cube $\Gamma_C := \partial[-1,1]^3$,
represented by surface meshes constructed by regularly splitting
its six faces into triangles.

We discretize both operators by Galerkin's method using piecewise
constant basis functions on the triangular mesh.
The resulting matrices are approximated using hybrid cross
approximation \cite{BOGR04} and converting the resulting
hierarchical matrices into $\mathcal{H}^2$-matrices
\cite[Chapter~6.5]{BO10}.
For the single-layer operator on the unit sphere, we obtain
matrices of dimensions between $2\,048$ and $2\,097\,152$,
while we have dimensions between $3\,072$ and $3\,145\,728$
for the double-layer operator on the cube's surface.

For the compression of the induced row and column basis, we
use a block-relative accuracy of $10^{-4}$ as described in
Section~\ref{se:error_control}.
For the re-compression into an $\mathcal{H}^2$-matrix with
the final coarser block tree $\ctIK$, we use the same
accuracy in combination with the block-relative error control
strategy described in \cite[Chapter~6.8]{BO10}.
For the largest matrix, the row and column cluster
bases for the original matrix require $719.7$ MB each, the compressed
induced bases require $2\,997.7$ MB each, and the final bases after
coarsening $972.8$ and $973.4$ MB.
The relatively high storage requirements for the intermediate bases
coincide with an accuracy that is far higher than prescribed.

%
%
\begin{table}
  \begin{equation*}
    \begin{array}{r|rrrr|rrrr}
      & \multicolumn{4}{|c}{\text{Induced}} & \multicolumn{4}{|c}{\text{Final}}\\
    n & t_\text{row} & t_\text{col} & t_\text{mat} & \epsilon_2
      & t_\text{row} & t_\text{col} & t_\text{mat} & \epsilon_2\\
      \hline
      2\,048 & 0.1 & 0.1 & 0.3 & 1.12_{-7} & 0.4 & 0.2 & 0.0 & 1.12_{-5}\\
      4\,608 & 0.2 & 0.2 & 0.6 & 9.41_{-8} & 1.3 & 0.7 & 0.1 & 5.91_{-6}\\
      8\,192 & 0.4 & 0.4 & 1.3 & 1.20_{-7} & 2.6 & 1.5 & 0.2 & 4.87_{-6}\\
     18\,432 & 1.0 & 1.0 & 3.1 & 1.13_{-7} & 6.6 & 3.8 & 0.5 & 4.93_{-6}\\
     32\,768 & 1.8 & 1.8 & 5.6 & 1.20_{-7} & 12.1 & 7.1 & 0.8 & 4.79_{-6}\\
     73\,728 & 4.1 & 4.0 & 12.7 & 1.17_{-7} & 28.6 & 16.3 & 1.8 & 4.98_{-6}\\
    131\,072 & 7.2 & 7.2 & 23.6 & 1.20_{-7} & 53.3 & 29.8 & 3.3 & 5.07_{-6}\\
    294\,912 & 16.6 & 16.5 & 53.7 & 1.18_{-7} & 120.5 & 68.0 & 7.9 & 4.77_{-6}\\
    524\,288 & 29.4 & 29.3 & 97.3 & 1.10_{-7} & 215.2 & 121.7 & 13.4 & 4.87_{-6}\\
 1\,179\,648 & 65.8 & 65.7 & 218.1 & 1.00_{-7} & 472.2 & 264.3 & 28.7 & 5.53_{-6}\\
 2\,097\,152 & 116.7 & 117.2 & 393.7 & 1.20_{-7} & 875.9 & 487.4 & 53.5 & 4.76_{-6}
    \end{array}
  \end{equation*}

  \caption{Run-times and relative spectral errors for the multiplication
    of the single-layer matrix}
  \label{ta:slp}
\end{table}

Table~\ref{ta:slp} lists the results of applying the new algorithm
to multiply the single-layer matrix with itself.
The ``Induced'' columns refer to the compression of the induced
row and column cluster bases:
$t_\text{row}$ gives the time in seconds for the row basis,
$t_\text{col}$ the time for the column basis, and
$t_\text{mat}$ the time for forming the $\mathcal{H}^2$-matrix
using these bases and the full block tree $\ctprod$.
The value $\epsilon_2$ gives the relative spectral
error, estimated by twenty steps of the power iteration.
Here $1.12_{-7}$ is a short notation for $1.12\times 10^{-7}$.

The ``Final'' columns refer to the approximation of the product
in the coarser block tree $\ctIK$.
The block tree $\ctIK$ is constructed with the
same standard admissibility condition used for $\ctIJ$ and $\ctJK$.
$t_\text{row}$ gives the time in seconds for the construction of the
row basis for the coarse block tree,
$t_\text{col}$ the time for the column basis, and
$t_\text{mat}$ the time for forming the final $\mathcal{H}^2$-matrix
with the coarse block tree $\ctIK$.
The intermediate representation of the product is
set up during the construction of the row basis and re-used during
the construction of the column basis.
This explains why the column basis takes considerably less time.
The value $\epsilon_2$ is again the estimated
relative spectral error.

These experiments were carried out in a sequential
implementation of the algorithm on an AMD EPYC 7713 processor using
the shared-memory AOCL-BLIS library for linear algebra
subroutines.
Preliminary experiments indicate that
distributing the clusters among multiple processor cores can
significantly reduce the computing time of both phases of
the algorithm.

The time required to set up the cluster basis products $V_{X,s}^* W_{Y,s}$
and the total weights $Z_t$ for the first algorithm are not listed,
since these operations' run-times are negligible compared to the
new algorithms.

We can see that the relative errors remain well below the
prescribed bound of $10^{-4}$ and that the relative errors for the
first phase are even smaller.

%
%
\begin{table}
  \begin{equation*}
    \begin{array}{r|rrrr|rrrr}
      & \multicolumn{4}{|c}{\text{Induced}} & \multicolumn{4}{|c}{\text{Final}}\\
    n & t_\text{row} & t_\text{col} & t_\text{mat} & \epsilon_2
      & t_\text{row} & t_\text{col} & t_\text{mat} & \epsilon_2\\
      \hline
      3\,072 & 0.2 & 0.2 & 0.4 & 9.13_{-7} & 0.9 & 0.4 & 0.1 & 1.34_{-5}\\
      6\,912 & 0.6 & 0.4 & 1.2 & 1.57_{-6} & 3.1 & 0.8 & 0.3 & 1.14_{-5}\\
     12\,288 & 1.2 & 0.7 & 2.2 & 1.89_{-6} & 5.4 & 2.9 & 0.4 & 7.46_{-6}\\
     27\,648 & 2.9 & 1.7 & 5.7 & 2.90_{-6} & 14.5 & 8.6 & 1.3 & 7.58_{-6}\\
     49\,152 & 5.3 & 3.0 & 10.1 & 3.43_{-6} & 23.9 & 13.1 & 1.9 & 7.21_{-6}\\
    110\,592 & 12.1 & 7.0 & 23.9 & 4.52_{-6} & 57.9 & 32.6 & 4.2 & 7.02_{-6}\\
    196\,608 & 21.4 & 12.3 & 41.9 & 4.95_{-6} & 97.5 & 49.5 & 6.1 & 6.63_{-6}\\
    442\,368 & 47.9 & 27.7 & 97.3 & 5.72_{-6} & 236.1 & 122.1 & 15.9 & 6.35_{-6}\\
    786\,432 & 86.5 & 47.4 & 169.0 & 6.15_{-6} & 388.9 & 187.7 & 49.3 & 6.76_{-6}\\
 1\,769\,472 & 188.6 & 108.8 & 390.6 & 6.75_{-6} & 926.2 & 454.2 & 72.2 & 7.79_{-6}\\
 3\,145\,728 & 338.7 & 190.7 & 673.0 & 7.12_{-6} & 1562.5 & 765.6 & 109.9 & 7.79_{-6}
    \end{array}
  \end{equation*}

  \caption{Run-times and relative spectral errors for the multiplication
    of the double-layer matrix}
  \label{ta:dlp}
\end{table}

Table~\ref{ta:dlp} shows the corresponding results for the double-layer
matrix on the surface of the cube.
We can again observe that the required accuracy is reliably provided.

%
%
\begin{figure}
  \pgfdeclareimage[width=0.8\textwidth]{coarsened}{fi_coarsened}

  \begin{center}
    \pgfuseimage{coarsened}
  \end{center}

  \caption{Run-time for the adaptive $\mathcal{H}^2$-matrix multiplication
    per degree of freedom.}
  \label{fi:runtime}
\end{figure}

To see that the new algorithm indeed reaches the optimal linear complexity
with respect to the matrix dimension $n$, we show the time divided by $n$
in Figure~\ref{fi:runtime} for the single-layer potential
(SLP) on the unit sphere and the double-layer potential (DLP) on the
unit cube.
We can see that the runtime \emph{per degree of freedom} is bounded
uniformly both for the single-layer and the double-layer matrix, i.e.,
we observe $\mathcal{O}(n)$ complexity.

\smallskip

In conclusion, we have found an algorithm that can approximate the product
of two $\mathcal{H}^2$-matrices at a prescribed accuracy in linear, i.e.,
optimal complexity.
In the first phase, the algorithm approximates the induced cluster bases
for a refined intermediate block tree $\ctprod$.
In the second phase, an optimized representation for a prescribed block
tree $\ctIK$ is constructed.
Both phases can be performed with localized error control for all
submatrices.

The fact that the matrix multiplication can indeed be performed in
linear complexity leads us to hope that similar algorithms can be
developed for important operations like the Cholesky or LU factorization,
which would immediately give rise to efficient solvers for large
systems of linear equations.


\bibliographystyle{plain}
\bibliography{hmatrix}

\begin{thebibliography}{10}

\bibitem{AN92}
C.~R. Anderson.
\newblock An implementation of the fast multipole method without multipoles.
\newblock {\em SIAM J. Sci. Stat. Comp.}, 13:923--947, 1992.

\bibitem{BEHA03}
M.~Bebendorf and W.~Hackbusch.
\newblock Existence of {${\mathcal{H}}$}-matrix approximants to the inverse
  {FE}-matrix of elliptic operators with {$L^\infty$}-coefficients.
\newblock {\em Numer. Math.}, 95:1--28, 2003.

\bibitem{BO04a}
S.~{B\"orm}.
\newblock {${\mathcal H}^2$}-matrix arithmetics in linear complexity.
\newblock {\em Computing}, 77(1):1--28, 2006.

\bibitem{BO10}
S.~{B\"orm}.
\newblock {\em Efficient Numerical Methods for Non-local Operators: {${\mathcal
  H}^2$}-Matrix Compression, Algorithms and Analysis}, volume~14 of {\em EMS
  Tracts in Mathematics}.
\newblock EMS, 2010.

\bibitem{BOGR04}
S.~{B\"orm} and L.~Grasedyck.
\newblock Hybrid cross approximation of integral operators.
\newblock {\em Numer. Math.}, 101:221--249, 2005.

\bibitem{BOHA02}
S.~{B\"orm} and W.~Hackbusch.
\newblock Data-sparse approximation by adaptive {${\mathcal{H}}^2$}-matrices.
\newblock {\em Computing}, 69:1--35, 2002.

\bibitem{BOHA02a}
S.~{B\"orm} and W.~Hackbusch.
\newblock {${\mathcal{H}}^2$}-matrix approximation of integral operators by
  interpolation.
\newblock {\em Appl. Numer. Math.}, 43:129--143, 2002.

\bibitem{BOLOME02}
S.~{B\"orm}, M.~{L\"ohndorf}, and J.~M. Melenk.
\newblock Approximation of integral operators by variable-order interpolation.
\newblock {\em Numer. Math.}, 99(4):605--643, 2005.

\bibitem{BORE14}
S.~{B\"orm} and K.~Reimer.
\newblock Efficient arithmetic operations for rank-structured matrices based on
  hierarchical low-rank updates.
\newblock {\em Comp. Vis. Sci.}, 16(6):247--258, 2015.

\bibitem{BOSA03}
S.~{B\"orm} and S.~A. Sauter.
\newblock {BEM} with linear complexity for the classical boundary integral
  operators.
\newblock {\em Math. Comp.}, 74:1139--1177, 2005.

\bibitem{CHGULY05}
S.~Chandrasekaran, M.~Gu, and W.~Lyons.
\newblock A fast adaptive solver for hierarchically semiseparable
  representations.
\newblock {\em Calcolo}, 42:171--185, 2005.

\bibitem{CHGUPA06}
S.~Chandrasekaran, M.~Gu, and T.~Pals.
\newblock A fast {ULV} decomposition solver for hierarchically semiseparable
  representations.
\newblock {\em SIAM J. Matrix Anal. Appl.}, 28(3):603--622, 2006.

\bibitem{DOHAMU19}
J.~{D\"olz}, H.~Harbrecht, and M.~D. Multerer.
\newblock On the best approximation of the hierarchical matrix product.
\newblock {\em SIAM Mat. Anal. Appl.}, 40(1), 2019.

\bibitem{GAHAKH00}
I.~Gavrilyuk, W.~Hackbusch, and B.~N. Khoromskij.
\newblock $\mathcal{H}$-matrix approximation for the operator exponential with
  applications.
\newblock {\em Numer. Math.}, 92:83--111, 2002.

\bibitem{GAHAKH02}
I.~Gavrilyuk, W.~Hackbusch, and B.~N. Khoromskij.
\newblock Data-sparse approximation to operator-valued functions of elliptic
  operator.
\newblock {\em Mathematics of Computation}, 73:1107--1138, 2004.

\bibitem{GIRO02}
Z.~Gimbutas and V.~Rokhlin.
\newblock A generalized fast multipole method for nonoscillatory kernels.
\newblock {\em SIAM J. Sci. Comput.}, 24(3):796--817, 2002.

\bibitem{GR01a}
L.~Grasedyck.
\newblock Existence of a low-rank or {$\mathcal H$}-matrix approximant to the
  solution of a {Sylvester} equation.
\newblock {\em Numer. Lin. Alg. Appl.}, 11:371--389, 2004.

\bibitem{GRHA02}
L.~Grasedyck and W.~Hackbusch.
\newblock Construction and arithmetics of {${\mathcal{H}}$}-matrices.
\newblock {\em Computing}, 70:295--334, 2003.

\bibitem{GRHAKH02}
L.~Grasedyck, W.~Hackbusch, and B.~N. Khoromskij.
\newblock Solution of large scale algebraic matrix {Riccati} equations by use
  of hierarchical matrices.
\newblock {\em Computing}, 70:121--165, 2003.

\bibitem{GRKRLE05a}
L.~Grasedyck, R.~Kriemann, and S.~LeBorne.
\newblock Domain decomposition based {${\mathcal H}$-LU} preconditioning.
\newblock {\em Numer. Math.}, 112(4):565--600, 2009.

\bibitem{GRRO87}
L.~Greengard and V.~Rokhlin.
\newblock A fast algorithm for particle simulations.
\newblock {\em J. Comp. Phys.}, 73:325--348, 1987.

\bibitem{GRRO97}
L.~Greengard and V.~Rokhlin.
\newblock A new version of the fast multipole method for the {Laplace} equation
  in three dimensions.
\newblock In {\em Acta Numerica 1997}, pages 229--269. Cambridge University
  Press, 1997.

\bibitem{HA99}
W.~Hackbusch.
\newblock A sparse matrix arithmetic based on $\mathcal{H}$-matrices. {P}art
  {I}: {I}ntroduction to $\mathcal{H}$-matrices.
\newblock {\em Computing}, 62(2):89--108, 1999.

\bibitem{HA15}
W.~Hackbusch.
\newblock {\em Hierarchical Matrices: Algorithms and Analysis}.
\newblock Springer, 2015.

\bibitem{HAKHKR04}
W.~Hackbusch, B.~N. Khoromskij, and R.~Kriemann.
\newblock Hierarchical matrices based on a weak admissibility criterion.
\newblock {\em Computing}, 73:207--243, 2004.

\bibitem{HAKHSA00}
W.~Hackbusch, B.~N. Khoromskij, and S.~A. Sauter.
\newblock On $\mathcal{H}^2$-matrices.
\newblock In H.~Bungartz, R.~Hoppe, and C.~Zenger, editors, {\em Lectures on
  Applied Mathematics}, pages 9--29. Springer-Verlag, Berlin, 2000.

\bibitem{HATO24}
D.~Halikias and A.~Townsend.
\newblock Structured matrix recovery from matrix-vector products.
\newblock {\em Num. Lin. Alg. Appl.}, 31(1):e2531, 2024.

\bibitem{LE20}
K.~Lessel.
\newblock {\em Advancements in Algorithms for Approximations of Rank Structured
  Matrices}.
\newblock PhD thesis, University of California, Santa Barbara, 2020.

\bibitem{LEMA22}
J.~Levitt and P.-G. Martinsson.
\newblock Linear-complexity black-box randomized compression of rank-structured
  matrices.
\newblock Technical report, arXiv, 2022.
\newblock \url{https://arxiv.org/abs/2205.02990}.

\bibitem{LILUYI11}
L.~Lin, J.~Lu, and L.~Ying.
\newblock Fast construction of hierarchical matrix representation from
  matrix-vector multiplication.
\newblock {\em J. Comp. Phys.}, 230(10):4071--4087, 2011.

\bibitem{MAJI18}
M.~Ma and D.~Jiao.
\newblock Accuracy directly controlled fast direct solution of general
  {$\mathcal{H}^2$}-matrices and its application to solving electrodynamic
  volume integral equations.
\newblock {\em IEEE Trans. Microw. Theo. Tech.}, 66(1):35--48, 2018.

\bibitem{MADEYO22}
Q.~Ma, S.~Deshmukh, and R.~Yokota.
\newblock Scalable linear time dense direct solver for {3-D} problems without
  trailing sub-matrix dependencies.
\newblock Technical report, Tokyo Institut of Technology, 2022.
\newblock arXiv 2208.10907.

\bibitem{RO85}
V.~Rokhlin.
\newblock Rapid solution of integral equations of classical potential theory.
\newblock {\em J. Comp. Phys.}, 60:187--207, 1985.

\bibitem{ST69}
V.~Strassen.
\newblock Gaussian elimination is not optimal.
\newblock {\em Numer. Math.}, 13(4):354--356, 1969.

\bibitem{CHGULIXI09b}
J.~Xia, S.~Chandrasekaran, M.~Gu, and X.~S. Li.
\newblock Fast algorithms for hierarchically semiseparable matrices.
\newblock {\em Numer. Lin. Alg. Appl.}, 2009.
\newblock available at \url{http://dx.doi.org/10.1002/nla.691}.

\end{thebibliography}

\end{document}